\newcommand{\bC}{\mathbb{C}}
\newcommand{\bN}{\mathbb{N}}
\newcommand{\bQ}{\mathbb{Q}}
\newcommand{\bR}{\mathbb{R}}
\newcommand{\bS}{\mathbb{S}}
\newcommand{\bT}{\mathbb{T}}
\newcommand{\bZ}{\mathbb{Z}}
\newcounter{dummy} \numberwithin{dummy}{section}
\theoremstyle{definition}
\newtheorem{mydef}[dummy]{Definition}
\newtheorem{prop}[dummy]{Proposition}
\newtheorem{thm}[dummy]{Theorem}
\newtheorem{lemma}[dummy]{Lemma}
\newtheorem{eg}[dummy]{Example}
\newtheorem{remark}[dummy]{Remark}
\newtheorem{theo}{Theorem}
\newtheorem{mainthm}{Theorem}[theo] % Use theo as driver counter
\begin{document}

\begin{frontmatter}[classification=text]
%% EDITOR: this will force the keywords to appear right after the Abstract.
%%   If the abstract is too long and would force the keywords off the
%%   front page, please comment out % [classification=text] above
%%   This way the keywords will be floated on the bottom of the first page
%%   even though the Abstract spills over to the next page.

%%% AUTHOR: Title goes here.  This line is optional.  You must use it
%%   if title has footnote attached or requires nontrivial typesetting,
%%   e.g., inclusion of linebreaks to force nice layout.
\title{Quantitative twisted patterns in positive density subsets}%\footnote{This is a footnote to the title}} %% please capitalize all significant words

%%% AUTHOR:
%%% List all authors. If you wish, place grant acknowledgements in \thanks.
%%% In brackets include a short tag for each author.
\author[kamil]{Kamil Bulinski\thanks{Supported by the Australian Research Council grant DP210100162.}}
\author[alexander]{Alexander Fish\thanks{Supported by the Australian Research Council grant DP210100162.}}

%%% AUTHOR: Abstract goes here
\begin{abstract}
We make quantitative improvements to recently obtained results on the structure of the image of a large difference set under certain quadratic forms and other homogeneuous polynomials. Previous proofs used deep results of Benoist-Quint on random walks in certain subgroups of $\operatorname{SL}_r(\bZ)$ (the symmetry groups of these quadratic forms) that were not of a quantitative nature. Our new observation relies on noticing that rather than studying random walks, one can obtain more quantitative results by considering polynomial orbits of these group actions that are not contained in cosets of submodules of $\bZ^r$ of small index. Our main new technical tool is a uniform Furstenberg-S\'{a}rk\"{o}zy theorem that holds for a large class of polynomials not necessarily vanishing at zero, which may be of independent interest and is derived from a density increment argument and Hua's bound on polynomial exponential sums. 
\end{abstract}
\end{frontmatter}

%%% AUTHOR: body of paper starts here
\section{Introduction}

We begin by recalling the following result of Magyar on the abundance of distances in positive density subsets of $\bZ^d$, for $d \geq 5$.

\begin{thm}[Magyar \cite{Magyar}] \label{Magyar thm} For all $\epsilon>0$ and integers $d \geq 5$ there exists a positive integer $k=k(\epsilon,d)>0$ such that the following holds: If $B \subset \bZ^d$ has upper Banach density $$d^*(B):= \lim_{L \to \infty} \max_{x \in \bZ^d} \frac{|B \cap (x+[0,L)^d)|}{L^d}> \epsilon,$$ then there exists a positive integer $N_0=N_0(B)$ such that $$ k\bZ_{>N_0} \subset \left\{ \| b_1 - b_2\|^2 \text{ } | \text{ } b_1,b_2 \in B \right\}.$$  \end{thm}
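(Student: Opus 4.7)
The plan is to combine Furstenberg's correspondence principle with a Hardy--Littlewood circle method analysis of the spherical averaging operators on $\bZ^d$, exploiting that $d\geq 5$ makes the singular series for the quadratic form $\|\cdot\|^2$ behave well. First, the correspondence principle produces a measure-preserving $\bZ^d$-action $(X,\mu,T)$ and a measurable set $A\subset X$ with $\mu(A)>\epsilon$ such that $\mu(A\cap T^{-v}A)>0$ implies $v\in B-B$. It therefore suffices to prove that the averaged correlation
$$ \Phi(n) := \frac{1}{r_d(n)} \sum_{\substack{v\in\bZ^d\\ \|v\|^2 = n}} \mu(A \cap T^{-v}A) $$
is strictly positive for all large $n\in k\bZ$, where $r_d(n)$ counts integer representations of $n$ as a sum of $d$ squares.

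By the spectral theorem applied to the unitary $\bZ^d$-action, $\Phi(n)=\int_{\bT^d}\widehat{\sigma_n}(\xi)\,d\nu_A(\xi)$, where $\nu_A$ is the spectral measure of $1_A$ and $\widehat{\sigma_n}(\xi)=r_d(n)^{-1}\sum_{\|v\|^2=n}e(\xi\cdot v)$ is the normalised exponential sum over the sphere, so the task reduces to controlling $\widehat{\sigma_n}$ as $n\to\infty$ along $k\bZ$. I would decompose $\bT^d$ into major arcs around rationals $a/q$ with small $q$ and the complementary minor arcs. On the minor arcs, Hua's bound for quadratic exponential sums gives $|\widehat{\sigma_n}(\xi)|\to 0$ uniformly; on the major arcs, $\widehat{\sigma_n}(\xi)$ factors asymptotically as a singular series $\mathfrak{S}(n;a/q)$ times a smooth local factor, and the hypothesis $d\geq 5$ guarantees that this singular series is bounded below by a positive constant $c(d)>0$ whenever $n$ avoids finitely many local obstructions modulo small primes.

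Finally, I would choose $k$ to be a sufficiently divisible positive integer that clears all these local obstructions and majorises every denominator $q$ whose singular series contribution can be non-zero. Then for $n\in k\bZ_{>N_0}$, the spectral limit of $\widehat{\sigma_n}$ acts on $1_A$ by orthogonal projection onto the finite-dimensional subspace of rational eigenfunctions with denominators dividing $k$; by Cauchy--Schwarz this projection has $L^2$-norm at least $\mu(A)\geq\epsilon$, yielding $\Phi(n)\geq c(\epsilon,d)>0$, which is the desired conclusion. The main obstacle will be making the uniform circle-method asymptotic for $\widehat{\sigma_n}$ fully quantitative and uniform in $\xi$: the restriction $d\geq 5$ is essential here, since in lower dimensions the singular series can vanish or the minor-arc error matches the main term, and the spectral limit is no longer captured by only finitely many rational eigenfunctions.
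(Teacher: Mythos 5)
This is Magyar's theorem, which the paper states only as background and does not prove; the cited proof lives in \cite{Magyar}, with an ergodic reformulation in the first author's \cite{BulinskiSpherical}. So there is no ``paper's own proof'' to compare against. That said, your sketch does follow the actual route in those references: Furstenberg correspondence, a spectral representation of the correlation $\Phi(n)$ as an integral of the normalised spherical exponential sum $\widehat{\sigma_n}$ against the spectral measure of $1_A$, and a circle-method major/minor arc split where $d\geq 5$ enters through positivity of the singular series.

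Two points in your final step are off. First, the limit of $\widehat{\sigma_n}$ along $n\in k\bZ$ is not an orthogonal projection onto rational eigenfunctions: it is a positive linear combination of eigenspace projections weighted by the values of the singular series (local densities) at the corresponding rationals $a/q$, and those weights are only bounded \emph{below} by some $c(d)>0$, not equal to $1$. Second, $\Phi(n)$ is a two-point correlation, quadratic in $1_A$, so the Cauchy--Schwarz step gives $\Phi(n)\geq c(d)\,\mu(A)^2 - o(1) \geq c(d)\,\epsilon^2/2$ for large $n$, not a bound $\geq \mu(A)\geq\epsilon$. Finally, the reason $N_0$ must be allowed to depend on $B$ (and not only on $\epsilon$ and $d$) is that the rate at which the minor-arc error and major-arc asymptotics for $\widehat{\sigma_n}$ converge against the spectral measure of $1_A$ is not uniform in the system; making this quantitative is the hard technical core of Magyar's argument, especially at the threshold $d=5$, and is not captured by your outline.
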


It was realised in a series of works initiated by Bj\"orklund and the authors (see \cite{BFChar}, \cite{BBTwisted}) that a similar result holds if one replaces the Euclidean squared distance $\| \cdot \|^2$ with other quadratic forms or other general functions. Before we state the general results, let us focus on some simple examples to demonstrate the type of questions of interest.

\begin{thm}[\cite{BFChar}] \label{thm: BF xy-z^2} Let $F:\bZ^3 \to \bZ$ be the non-positive definite quadratic form $F(x,y,z) = xy - z^2$ or $F(x,y,z) = x^2 - y^2 - z^2$. Then for all $B \subset \bZ^3$ of positive upper Banach density there exists a positive integer $k$ such that $$k\bZ \subset F(B-B).$$ \end{thm}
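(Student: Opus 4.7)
The plan is to pass from $B$ to an ergodic setting in which $B-B$ contains a Bohr neighborhood $D$ of the origin in $\bZ$, and then to show $F(D\times D\times D)\supset k\bZ$ by exploiting the action of the arithmetic symmetry group of $F$ on the level surfaces $\{F=N\}$.

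First I would invoke Furstenberg's correspondence to associate to $B$ a probability measure-preserving system $(X,\cB,\mu,T)$ and a set $A\in\cB$ of measure $d^*(B)$ with the property that $n\in B-B$ whenever $\mu(A\cap T^{-n}A)>0$. A spectral argument on the Kronecker factor of $(X,T)$ then shows that the return-time set $R:=\{n\in\bZ:\mu(A\cap T^{-n}A)>0\}$ contains a Bohr neighborhood $D=\{n\in\bZ:\|n\alpha_i\|_{\bR/\bZ}<\eta,\ i=1,\ldots,r\}$ for suitable $\alpha_i\in\bR/\bZ$ and $\eta>0$. It therefore suffices to find a positive integer $k$ with $F(D\times D\times D)\supset k\bZ$.

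Next, let $G\subset SL_3(\bZ)$ be the arithmetic stabilizer of $F$: for $F(x,y,z)=xy-z^2$, $G$ is the image of $SL_2(\bZ)$ acting on symmetric $2\times 2$ matrices by $v\mapsto gvg^T$; for $F(x,y,z)=x^2+y^2-z^2$, $G$ is a finite-index subgroup of $O(2,1)(\bZ)$. The group $G$ preserves each level set $L_N:=\{v\in\bZ^3:F(v)=N\}$, and by Hasse--Minkowski together with spinor-genus theory, $L_N$ is a single $G$-orbit whenever $N$ lies in a certain full residue class $k\bZ$ determined by the bad primes of $F$. Fix any $v_N\in L_N$; then $G\cdot v_N$ is Zariski-dense in the real surface $\{F=N\}$, and the task reduces to showing $G\cdot v_N$ meets the product set $D^3$.

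The heart of the proof is this last step, and it is the principal obstacle, since $D^3$ is only a product of positive-density subsets of $\bZ$ rather than a positive-density subset of $\bZ^3$. The approach of \cite{BFChar} is to invoke Benoist--Quint equidistribution for random walks on the linear action of $G$ on $\bZ^3$, in a refinement that treats the coordinate-wise Bohr constraints. Alternatively, and in the spirit of the present paper, one can choose an explicit one-parameter polynomial family $\{\gamma(t)\}_{t\in\bZ}\subset G$ (for example the unipotent subgroup $\begin{pmatrix}1&t\\0&1\end{pmatrix}$ of $SL_2(\bZ)$ for $F=xy-z^2$) so that each of the three coordinates of $\gamma(t)v_N$ is an explicit polynomial in $t$ with integer coefficients; one then invokes the uniform Furstenberg--S\'ark\"ozy theorem for polynomials not necessarily vanishing at zero, advertised in the abstract, to conclude that for a positive-density set of $t$ all three coordinates of $\gamma(t)v_N$ simultaneously lie in $D$. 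This yields $(a,b,c)\in D^3\subset(B-B)^3$ with $F(a,b,c)=N$ for every $N\in k\bZ$, and the finitely many small exceptions are covered by $F(0,0,0)=0$ together with direct constructions from small elements of $B-B$.
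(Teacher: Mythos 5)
Your outline has a genuine gap at its central step. You reduce the problem to showing that the return-time set $R=\{n\in\bZ:\mu(A\cap T^{-n}A)>0\}$ (equivalently, a subset of $B-B$ for a positive-density $B\subset\bZ$) contains a Bohr neighbourhood $D$ of $0$, and then to finding an orbit point $\gamma(t)v_N$ with all three coordinates in $D$. But the claimed spectral fact is false: the Kronecker-factor computation only shows that $\mu(A\cap T^{-n}A)>0$ for all $n$ in a Bohr neighbourhood of $0$ \emph{minus a set of density zero} (F\o lner), and the exceptional set cannot be removed in general --- by a well-known consequence of Kriz's construction there exist positive-density $B\subset\bZ$ for which $B-B$ contains no Bohr neighbourhood of $0$. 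This is exactly why Bogolyubov's theorem, quoted in the introduction, needs the fourfold sum $B-B+B-B$. Nor can you absorb the exceptional set: your final step needs three polynomial values $P_1(t),P_2(t),P_3(t)$ to land in $D$ simultaneously, and a density-zero exceptional set in $\bZ$ can a priori swallow the entire image of a fixed polynomial. Relatedly, the uniform Furstenberg--S\'ark\"ozy theorem of this paper is a recurrence statement for a single $\bZ^r$-action along a polynomial orbit; it does not provide the coordinate-wise simultaneous Bohr-recurrence in three separate one-dimensional systems that your last step asks for.

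The fix is to avoid Bohr sets entirely: pass from $B\subset\bZ$ to $B^3\subset\bZ^3$ (positive density) and apply Furstenberg's correspondence to the resulting $\bZ^3$-action, so that it suffices to find, for each $N\in k\bZ$, a single vector $v$ with $F(v)=N$ and $\mu(A\cap T^{-v}A)>0$ for that $\bZ^3$-system. One then takes one explicit representative $v_N$ on each level set (the companion matrices, or the matrices $a_t$ of Section 7), moves it by a word in fixed unipotents of the symmetry group so that the orbit is hyperplane-fleeing and coset-fleeing, parametrises that orbit by a single integer polynomial curve of bounded degree and multiplicative complexity, and applies the polynomial mean ergodic theorem (here made uniform via the Hua bound and the measure-increment argument). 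Your appeal to Hasse--Minkowski and spinor genera to make $L_N$ a single orbit is both incorrect (the number of $\operatorname{SL}_2(\bZ)$-orbits on $\{xy-z^2=N\}$ grows with the class number and is unbounded) and unnecessary --- one controlled orbit per level set suffices --- and the theorem admits no finite set of exceptions to be patched at the end.
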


Note however that unlike for the case of $\| \cdot \|^2$ in Magyar's theorem, it was not established in these works that $k$ depends only on the upper Banach density of $B$. One of the main purposes of this paper is to demonstrate that this integer $k$ does indeed depend only on $d^*(B)$ and not on $B$, and thus answering affirmatively a question posed in \cite{FishProduct}.

\begin{thm}\label{thm: example of uniform virtually recurrent level sets} Let $F:\bZ^3 \to \bZ$ be the non-positive definite quadratic form $F(x,y,z) = xy - z^2$ or $F(x,y,z) = x^2 - y^2 - z^2$. Then for all $\epsilon > 0$ there exists a positive integer $k = k(\epsilon) \in \bZ_{>0}$ such that for all $B \subset \bZ^3$ with $d^*(B) > \epsilon$ we have that $$k\bZ \subset F(B-B).$$ \end{thm}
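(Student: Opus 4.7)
The plan is to reduce Theorem \ref{thm: example of uniform virtually recurrent level sets} to a quantitative recurrence statement via the Furstenberg correspondence principle, and then to establish this recurrence by combining explicit polynomial parametrizations of the level sets $\{F = kn\}$, coming from the arithmetic symmetry group of $F$, with the uniform polynomial Furstenberg--S\'ark\"ozy theorem mentioned in the abstract. First I would apply Furstenberg correspondence to $B \subset \bZ^3$ to produce a measure preserving $\bZ^3$-system $(X, \mu, (T_v)_{v \in \bZ^3})$ and a measurable $A \subset X$ with $\mu(A) \geq d^*(B) > \epsilon$, in such a way that $\mu(A \cap T_v A) > 0$ implies $v \in B - B$. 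It then suffices to exhibit, for each $n \in \bZ_{>0}$, some $v \in \{F = kn\}$ that is a recurrence vector for every set of measure at least $\epsilon$ in every such system, with $k$ depending only on $\epsilon$.

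For each $n$, I would parametrize a large portion of $\{F = kn\}$ by integer polynomial maps coming from orbits of unipotent subgroups of the stabilizer of $F$ in $\operatorname{SL}_3(\bZ)$. For $F(x,y,z) = xy - z^2$ the transvections $(x,y,z) \mapsto (x, y+2tz+t^2 x, z+tx)$ and $(x,y,z) \mapsto (x+2sz+s^2 y, y, z+sy)$ both preserve $F$, and applying their compositions to a suitable base point $v_0$ with $F(v_0) = kn$ produces a polynomial map $p_n : \bZ^2 \to \bZ^3$ whose image lies in $\{F = kn\}$; an analogous parametrization is available for $F(x,y,z) = x^2 + y^2 - z^2$. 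The freedom to choose $v_0$, paid for with the factor $k$, lets us arrange that the image of $p_n$ is not contained in any coset of a submodule of $\bZ^3$ of index below a prescribed function of $\epsilon$. This replaces the Benoist--Quint random walk equidistribution used in \cite{BFChar,BBTwisted}. I would then invoke the new uniform polynomial Furstenberg--S\'ark\"ozy theorem with $p_n$: it should guarantee that under this non-degeneracy hypothesis, and crucially without requiring $p_n$ to vanish at the origin, there exists $\mathbf{t} \in \bZ^2$ with $\mu(A \cap T_{p_n(\mathbf{t})} A) > 0$, whence $p_n(\mathbf{t}) \in B - B$ and $kn = F(p_n(\mathbf{t})) \in F(B-B)$.

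The principal obstacle is the uniform polynomial Sárközy theorem itself: it must be established for polynomial maps that need not vanish at the origin, so the standard translation reduction to the classical Furstenberg--S\'ark\"ozy setting is unavailable. As hinted at in the abstract, this is to be carried out through a density increment scheme driven by Hua's bound for polynomial exponential sums, and care is needed so that the non-degeneracy hypothesis enters in a quantitatively uniform way. A subsidiary technical obstacle in the parametrization step is to verify that the explicit orbits produced by unipotents in the symmetry group genuinely satisfy this non-degeneracy hypothesis after allowing the factor $k = k(\epsilon)$, which is where the explicit dependence of $k$ on $\epsilon$ is determined.
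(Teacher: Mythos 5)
Your overall strategy is the same as the paper's: reduce via Furstenberg correspondence, parametrize a portion of each level set of $F$ by a polynomial orbit of unipotent elements in the symmetry group, and feed this into a uniform Furstenberg--S\'ark\"ozy theorem for polynomials not vanishing at the origin, which in turn is proved by a density-increment scheme driven by Hua's exponential-sum bound. Two places where your account is misaligned with what actually happens are worth flagging.

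First, you misattribute the role of the integer $k$. You write that ``the freedom to choose $v_0$, paid for with the factor $k$, lets us arrange'' the coset-fleeing property, and later that the non-degeneracy must hold ``after allowing the factor $k=k(\epsilon)$, which is where the explicit dependence of $k$ on $\epsilon$ is determined.'' In the paper these are decoupled: the $Q$-coset-fleeing and hyperplane-fleeing properties of the orbit hold with a \emph{fixed} $Q$ and a fixed number $N$ of unipotent generators, independent of $\epsilon$ and of the target value. The mechanism is the identity $\gamma_0 c_p - c_p = v_0$ (equation \eqref{char polynomials identity} and its analogue for $x^2-y^2-z^2$), showing that the orbit difference set always contains the fixed spanning set $\Gamma_n v_0$; this makes Proposition~\ref{prop: N,Q bounds} uniform in the base point. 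The factor $k$ instead comes entirely from the ergodic measure-increment argument (Lemma~\ref{lemma: measure increment}): one repeatedly passes to a $T^q$-ergodic component on which the relative measure of $B$ increases by a factor $(1+\delta)$ until $B$ becomes $(q,\delta)$-equidistributed, and since this can happen at most $\log(\epsilon^{-1})/\log(1+\delta)$ times one obtains $k \leq q^{\log(\epsilon^{-1})/\log(1+\delta)}$. So $k(\epsilon)$ is determined by the number of increment steps, not by a search for a non-degenerate $v_0$.

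Second, you parametrize the orbit by a two-parameter polynomial $p_n:\bZ^2 \to \bZ^3$, but the uniform polynomial recurrence theorem (Theorem~\ref{thm: uniform polynomial recurrence}) is formulated for one-variable polynomials $P:\bZ\to\bZ^r$. The paper bridges this by the substitution $n_j \mapsto n^{(r+1)^j}$, which is injective on the relevant monomials and hence turns the multi-parameter orbit polynomial $u_1^{n_1}\cdots u_N^{n_N}v$ into a single-variable polynomial of controlled degree with the same coefficient set, preserving both the multiplicative complexity bound and the hyperplane-fleeing property. Without this (or a genuinely multi-variable version of the uniform recurrence theorem, which you would then have to prove) the last step of your reduction does not go through as stated.
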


The  techniques that we develop in this paper allow us  to obtain quantitative polynomial extension of Bogolyubov's result \cite{Bogolyubov} on the linear image of a difference set.

\begin{thm}[Polynomial Bogolyubov's theorem] \label{thm: polynomial bog}
Let $\epsilon > 0$,  and assume that $R(n) \in \bZ[n]$ is a polynomial such that $\deg{(R)} \ge 2$ and $R(0) = 0$.  Let $E \subset \bZ^2$ with $d^*(E) > \epsilon$. Then there exists $k \le k(\epsilon, R)$ such that 
\[
k \bZ \subset \{ x + R(y) \, | \, (x,y) \in E-E \}.
\]
\end{thm}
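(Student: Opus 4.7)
I plan to combine Furstenberg's correspondence principle with the uniform polynomial Furstenberg-S\'{a}rk\"{o}zy theorem (Theorem~C) applied to a family of polynomials parametrised by the target value $m$. By correspondence (passing through the ergodic decomposition if necessary), it suffices to prove the following ergodic statement: for any ergodic $\bZ^2$-action $T:\bZ^2 \curvearrowright (X,\mu)$ and $B \subset X$ with $\mu(B) > \epsilon$, there exists $k \leq k(\epsilon, R)$ such that for every $m \in \bZ$ some $v \in \bZ^2$ satisfies $v_1 + R(v_2) = km$ and $\mu(B \cap T^v B) > 0$.

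Writing $R(n) = \sum_{j=1}^{d} c_j n^j$ with $d = \deg R \geq 2$ and $g := \gcd(c_2, \ldots, c_d)$, I would apply Theorem~C to the family $P_m(n) := (m - R(n),\, n) : \bZ \to \bZ^2$. Its non-constant part $P_m(n) - P_m(0) = (-R(n), n)$ does not depend on $m$, so a direct check (using that $d'\mid a_1 g$ and $d'\mid a_2 - a_1 c_1$ forces $d'\mid g$ under $\gcd(a_1,a_2,q)=1$) shows $P_m$ has multiplicative complexity at most $g$ uniformly in $m$; and $P_m$ is hyperplane-fleeing because $\deg R \geq 2$ makes its image a nonlinear curve. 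Theorem~C with $D = d$, $r = 2$, $Q = g$ then produces $k_0 = k_0(\epsilon, R)$ and $k = k(B) \leq k_0$ so that for every $m$ and arbitrarily large $n$, the displacement $v_{m,n} := (km - kR(n),\, kn)$ lies in $R(T,B) := \{v : \mu(B \cap T^v B) > 0\}$, with associated value $v_{m,n,1} + R(v_{m,n,2}) = km + \Delta_k(n)$, where $\Delta_k(n) := R(kn) - kR(n) = k \sum_{j=2}^{d} c_j (k^{j-1} - 1) n^j \in k\bZ$.

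The crux of the proof is to show that these values cover all of $k\bZ$. For each target $s \in \bZ$, I plan to apply the polynomial mean ergodic theorem to the average $\frac{1}{2N+1}\sum_{|n| \leq N} \mu(B \cap T^{(ks - R(kn),\, kn)} B)$. Using that $T^{(k,0)}$ acts trivially on the $T^{(k,0)}$-invariant factor (so that $T^{(-kR(n),\,kn)}\phi = T^{(0,kn)}\phi$ for $\phi := \bE[1_B \mid \cI^{T^{(k,0)}}]$), this average converges to a self-correlation $\langle \phi, T^{(0,kn)} \phi \rangle$ in the quotient $\bZ$-action, which is positive for $n$ in a set of positive lower density by polynomial Furstenberg-S\'{a}rk\"{o}zy applied to $\phi$ in that factor; if necessary, $k$ is first enlarged to a bounded multiple via Bogolyubov's theorem so that $T^{(k,0)}$ trivialises on the rational Kronecker part. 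Hence for each $s$, some $n$ yields $v = (ks - R(kn),\, kn) \in R(T,B)$ with $v_1 + R(v_2) = ks$, as desired.

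The main obstacle is this last step. A direct reparametrisation through $Q_s(n) := (s - R(kn)/k,\, n)$ — which would produce $v_1 + R(v_2) = ks$ directly — fails because the factor $k^{j-1}$ appearing in the coefficients of $R(kn)/k$ causes the multiplicative complexity of $Q_s$ to grow linearly with $k$, violating the uniform $Q$ required by Theorem~C (as can be seen concretely in the example $R(n) = n^2$: the polynomial $(s - kn^2, n)$ has non-constant part whose $\vec{a}$-projection has coefficient-gcd with $q$ as large as $k$ when $a_1 = 1$, $a_2 = 0$, $q = k$). The mean ergodic / S\'{a}rk\"{o}zy argument on the $T^{(k,0)}$-invariant characteristic factor is designed precisely to bridge this gap while keeping $k$ bounded in terms of $\epsilon$ and $R$.
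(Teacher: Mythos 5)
Your diagnosis in the first two paragraphs is correct and matches the paper's: since the reparametrised curve $k^{-1}P_{k,s}(n) = \bigl(s - R(kn)/k,\, n\bigr)$ has multiplicative complexity growing like $r_D k^{D-1}$, a single application of Theorem~\ref{thm: uniform polynomial recurrence} with a $k$-independent $Q$ cannot succeed. But your proposed fix in the third paragraph has a genuine gap. Writing $\mathds{1}_B = \phi + g$ with $\phi = \bE[\mathds{1}_B \mid \cI^{T^{(k,0)}}]$, one gets (the cross terms vanish because $T^v$ commutes with $T^{(k,0)}$ and so preserves $\cI^{T^{(k,0)}}$ and its orthocomplement)
\[
\mu\bigl(B \cap T^{(ks - R(kn),\,kn)}B\bigr) = \bigl\langle \phi,\, T^{(0,kn)}\phi\bigr\rangle + \bigl\langle g,\, T^{(ks - R(kn),\,kn)}g\bigr\rangle,
\]
and your reduction handles only the first term. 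The second term has no sign and must be shown small; for the dominated-convergence argument of Lemma~\ref{lemma: polynomial average zero on irrational spectrum} to kill it on average one needs $g$ orthogonal to the \emph{full rational Kronecker factor}, whereas your $g$ is only orthogonal to the $T^{(k,0)}$-invariant functions, a much smaller subspace. Your suggested patch --- enlarging $k$ by a bounded multiple ``via Bogolyubov's theorem'' so that $T^{(k,0)}$ trivialises on the rational Kronecker part --- is not available: the rational Kronecker factor of an ergodic $\bZ^2$-system can contain finite cyclic factors of arbitrarily large order (e.g.\ translation on $\prod_p \bZ/p\bZ$ crossed with a trivial second coordinate), so no $k$ bounded in terms of $\epsilon$ and $R$ alone trivialises it, and Bogolyubov's theorem asserts nothing of the sort.

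The paper resolves the circularity you identified by interleaving the choice of $k$ with a nested density increment, rather than fixing $k$ up front. At step $j$ one has already committed to $k = q_1\cdots q_{j-1}$ and now chooses $q_j = q(D,2,r_D k^{D},\epsilon^2/2)$, which accommodates the (by now fixed) multiplicative complexity of $k^{-1}P_{k,s}$. If $B$ is $(q_j,\delta)$-equidistributed with respect to the current $T^{k}$-ergodic component, Theorem~\ref{thm: low complexity and equidistribution gives recurrence} finishes with that $k$; otherwise one passes to a further ergodic component on which $\mu(B)$ has grown by a factor $(1+\delta)$, replaces $k$ by $kq_j$, and repeats. After at most $\log(\epsilon^{-1})/\log(1+\delta)$ steps the process must halt, so the final $k$ is bounded in terms of $\epsilon$ and $r_D$. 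The essential idea missing from your proposal is this incremental bootstrapping, in which each new scale $q_j$ only ever sees a $k$ that has already been fixed, so the complexity $r_D k^{D}$ never runs ahead of the parameters used to choose it.
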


Recall that a set $A \subset \bZ^r$ is called \textit{recurrent} or a \textit{set of recurrence} if for all measure preserving actions $T:\bZ^r \curvearrowright (X,\mu)$ and $B \subset X$ with $\mu(B)>0$ we have that $$\mu(T^aB \cap B) > 0 \quad \text{for some } a \in A.$$ By Furstenberg's correspondence principle \cite{Fur77}, if $A$ is recurrent then for all $B \subset \bZ^r$ with $d^*(B) > 0$ we have that $(B-B) \cap A \neq \emptyset$. For example, the Furstenberg-S\'{a}rk\"{o}zy theorem states the the set of squares is a set of recurrence. Thus it makes sense for us to make the following convenient definition.

\begin{mydef} We say that a function $F:\bZ^d \to \mathcal{S}$ has \textit{virtually recurrent level sets} if it satisfies the following condition: For all measure preserving actions $T:\bZ^d \curvearrowright (X,\mu)$ and $B \subset X$ with $\mu(B)>0$ there exists a positive integer $k$ such that for all $s \in F(k\bZ^d)$ there exists $v \in \bZ^d$ with $F(v) = s$ and $$\mu(B \cap T^{v}B) > 0. $$  We say that $F:\bZ^d \to \mathcal{S}$ has \textit{uniformly virtually recurrent level sets}  if for all  $\epsilon>0$ there exists a positive integer $k=k(\epsilon) > 0$ such that for all measure preserving actions $T:\bZ^d \curvearrowright (X,\mu)$ and $B \subset X$ with $\mu(B)>\epsilon$ we have that for all $s \in F(k\bZ^d)$ there exists a $v \in \bZ^d$ such that $F(v) = s$ and $$\mu(B \cap T^v B ) > 0. $$  \end{mydef}

By Furstenberg's correspondence principle we have the following combinatorial consequences of having virtually recurrent level sets.

\begin{prop} A function $F:\bZ^d \to \mathcal{S}$ has virtually recurrent level sets if for all sets $B \subset \bZ^d$ with $d^*(B)>0$ there exists a positive integer $k$ such that $F(k\bZ^d) \subset F(B-B)$. A function $F:\bZ^d \to \mathcal{S}$ has uniformly virtually recurrent level sets if for all $\epsilon>0$ there exists a positive integer $k=k(\epsilon) > 0$ such that $F(k \bZ^d) \subset F(B-B),$ for all $B \subset \bZ^d$ satisfiying $d^*(B) > \epsilon$. \end{prop}

Thus Theorem~\ref{thm: BF xy-z^2} is a consequence of the statement that the maps $F(x,y,z) = xy - z^2$ and $F(x,y,z) = x^2-y^2-z^2$ have virtually recurrent level sets, which was shown in \cite{BFChar} and \cite{BBTwisted}. While our new result Theorem~\ref{thm: example of uniform virtually recurrent level sets} is a consequence of the statement that these maps have uniformly virtually recurrent level sets.

\begin{remark} In order to be able to say that $\| \cdot \|^2$ has uniformly virtually recurrent level sets, one would need to remove a finite set of exceptions, i.e., replace the condition $F(k\bZ^d) \subset F(B-B)$ with $F(k\bZ^d) \subset F(B-B) \cup A_B$ where $A_B$ is a finite set (depending on $B$) as per Magyar's theorem (see the Ergodic formulation given by the first author in \cite{BulinskiSpherical}). However the examples that we study in this paper do not require this hence we avoid it.

\end{remark}

To motivate the use of this terminology note that if we could take $k=1$ then this would mean that for each $s$ in the range of $F$, the level set $\{ v \in \bZ^d | f(v) = s\}$ is recurrent. On the other hand, if $k=k(B) \neq 1$ then we can only say that those level sets with values in the image of a finite index subgroup of $\bZ^d$, namely $k\bZ^d$, are \textit{recurrent w.r.t to $B$} (note that $k$ could be different for each $B$ so it is not quite correct to say that those particular level sets are recurrent). But in geometric group theory the adverb \textit{virtually} means \textit{up to a finite index subgroup}.

\begin{remark} Every finite index subgroup $W \leq \bZ^d$ of index $k = |\bZ^d / W|$ contains $k'\bZ^d$ for some $k' \leq k^d$, so in fact this further justifies our use of the word virtual (i.e., we did not need to restrict our attention to just those subgroups of the form $k\bZ^d$, but it is convenient to do so).

\end{remark}

To provide an example of function which does not satisfy the conclusion of Theorem~\ref{thm: BF xy-z^2}, consider the linear function $F:\bZ^2 \to \bZ$ given by $F(x_1,x_2)=x_1+x_2$ and consider the Bohr set $B=B(\theta,\epsilon)\times B(\theta,\epsilon) \subset \bZ^2$, where $$B(\theta,\epsilon)=\{x \in \bZ ~|~ x\theta  \in (- \epsilon, \epsilon) \quad (\mathrm{mod} ~ 1) \},$$ for some irrational $\theta \in \bT=\bR/\bZ$ and small enough $\epsilon>0$. Then $d^{\ast}(B)=4\epsilon^2>0$ but $$F(B-B)=B(\theta,\epsilon) - B(\theta,\epsilon) + B(\theta,\epsilon) - B(\theta,\epsilon) \subset B(\theta,4\epsilon)$$ is contained in another Bohr set, which cannot contain a non-trivial subgroup of $\bZ$ whenever $\theta$ is irrational and $\epsilon<\frac{1}{8}$, as $k\bZ \theta$ is dense in $\bT$ for all non-zero integers $k$. On the other hand, a theorem of Bogolyubov \cite{Bogolyubov} (see also \cite{RuzsaBook}) states that for any $B \subset \bZ$ of positive upper Banach density, the set $B-B+B-B$ contains a Bohr set.

\subsection{Recurrent orbits}

The strategy in \cite{BFChar} for establishing that a function $F$ has virtually recurrent level sets involved studying the orbits of the linear automorphism group of $F$ and taking advantage of the fact that - for the functions $F$ of interest - this group has a rich enough algebraic structure (in particular, that action on $\bR^d$ is irreducible), which enabled the use of deep results of Benoist-Quint. This motivates us to make the following convenient definition.

\begin{mydef} A semigroup action $\Gamma \curvearrowright \bZ^d$ is said to have \textit{virtually recurrent orbits} if for all measure preserving actions $T:\bZ^d \curvearrowright (X,\mu)$ and $B \subset X$ with $\mu(B) > 0$ there exists a positive integer $k$ such that for all $v \in \bZ^d$ there exists a $\gamma \in \Gamma$ such that $$\mu( B \cap T^{ k \gamma v}B ) > 0 .$$ On the other hand, if $A \subset \bZ^d$ we say that a semigroup action  $\Gamma \curvearrowright \bZ^d$ has \textit{uniformly virtually recurrent orbits across $A$} if for each $\epsilon>0$ there exists a positive integer $k_0 = k_0(\epsilon)$ such that for all measure preserving actions $T:\bZ^d \curvearrowright (X,\mu)$ and $B \subset X$ with $\mu(B) > \epsilon$ there exists $0 < k \leq k_0(\epsilon)$ such that for each $v \in A$ there exists $\gamma \in \Gamma$ such that $$\mu(B \cap T^{k\gamma v}B) > 0.$$

\end{mydef}

To explain the relationships between these concepts, suppose that $\Gamma \leq \operatorname{GL}_d(\bZ) $ is the group of linear automorphisms of $F$, then for each fixed $v \in \bZ^d$ the orbit $k \Gamma v$ lies in the level set $F^{-1}(F(kv))$. Thus if $\Gamma \curvearrowright \bZ^d$ has virtually recurrent orbits then $F$ has virtually recurrent level sets. As for uniformly virtually recurrent orbits, there is the subtlety that $k$ is only uniformly bounded but not necessarily the same for all $B$ with $\mu(B)>\epsilon$, nonetheless we still recover a constant $k$ in the definition of uniformly virtual recurrent level sets by considering $k_0!$ (as $F(k_0! \bZ^d) \subset F(k\bZ^d)$ for all $k \leq k_0$). Unfortunately, for the examples of interest we have not been able to establish uniform virtual recurrence of all orbits, however we will show that we can for at least one orbit in each level set, which implies virtually recurrent level sets (each level set may be a disjoint union of many different orbits that we cannot all control).

Using this language, we can restate the main theorem in \cite{BFChar} as follows.

\begin{thm}[\cite{BFChar}] \label{thm: BFChar on adjoint representation} Let $\mathfrak{sl}_n(\bZ) \cong \bZ^{n^2-1}$ denote the additive group of $n \times n$ integer matrices of trace zero. Let $\Gamma = \operatorname{SL}_n(\bZ)$ act on  $\mathfrak{sl}_n(\bZ)$ by conjugation (the adjoint representation). Then this action has virtually recurrent orbits. In particular, as conjugation preserves determinants, the characteristic polynomial map $\mathfrak{sl}_n(\bZ) \to \bZ[t]$ given by $C \mapsto \det(tI - C)$ has virtually recurrent level sets.
\end{thm}

In particular, Theorem~\ref{thm: BFChar on adjoint representation} implies the statement of Theorem~\ref{thm: BF xy-z^2}  for $F(x,y,z) = xy - z^2$. Indeed, the map $F:\bZ^3 \to \bZ$ given by $F(x,y,z) = xy-z^2$ can be seen as the determinant of the $2\times2$ matrix obtained by identifying $(x,y,z) \in \bZ^3$ with $$ \begin{bmatrix}
    z   & -y \\
    x  & -z \\
\end{bmatrix} \in \mathfrak{sl}_2(\bZ).$$ Since the determinant map has virtually recurrent level sets, for any set $B \subset \bZ^3$ there exists $k$ with $F(k\bZ^3) \subset F(B-B)$. But $F(k\bZ^3) = k^2\bZ$ and this proves the statement of Theorem~\ref{thm: BF xy-z^2} for $F(x,y,z) = xy-z^2$. We defer the proof of the remaining part of Theorem~\ref{thm: BF xy-z^2} regarding the quadratic form $F(x,y,z) = x^2-y^2-z^2$ to Section~\ref{Section7}.

We now provide a quantitative improvement of this result by demonstrating that this characteristic polynomial map is uniformly virtually recurrent, answering a question raised in \cite{FishProduct}. To do this, we show that this action of $SL_n(\bZ)$ by conjugation has uniformly virtually recurrent orbits across the companion matrices of characteristic polynomials.

\begin{mydef} Given a polynomial $p(t) = a_0  + a_1t + a_2t^2 + \cdots + a_{n-1}t^{n-1} + t^n$, we can define its companion matrix by

$$c_p = \begin{bmatrix}
    0   & \dots & 0 & 0 & -a_0 \\
    1  & \dots & 0 & 0 & -a_1 \\
    \vdots & \ddots & \vdots & \vdots & \vdots \\
    0 & \dots & 1  & 0 & -a_{n-2} \\
    0 & \dots & 0 & 1  & -a_{n-1}
\end{bmatrix}$$

\end{mydef}

\begin{mainthm} \label{thm: uniform virtual recurrence for adjoint representation} Let $\Gamma = \operatorname{SL}_n(\bZ)$ act on  $\mathfrak{sl}_n(\bZ)$ by conjugation (the adjoint representation). Let $$ A = \{ c_p ~|~ p(t) \in \bZ[t] \text{ with } p^{(n-1)}(0) = 0 \text{ and } \deg{p} = n\}$$ be the set of companion matrices of integer polynomials with zero $(n-1)$-st term (so that the corresponding companion matrix has trace $0$ and thus in $\mathfrak{sl}_n(\bZ)$). Then this conjugation action has uniformly recurrent orbits across $A$. 
\end{mainthm}

Since the image of the characteristic polynomial  map $C \mapsto \det(tI - C)$ on the set $A$ in Theorem~\ref{thm: uniform virtual recurrence for adjoint representation} covers all  characteristic polynomials of the matrices in $\mathfrak{sl}_n(\bZ)$,  it follows %from Theorem~\ref{thm: uniform virtual recurrence for adjoint representation} 
that  the characteristic polynomial map  has \textbf{uniformly} virtually recurrent level sets. Likewise, every coefficient of the characteristic polynomial also has uniformly recurrent level sets. By specifying to the free coefficients of the characteristic polynomials  (the determinant) in the dimension $n = 2$, the quadratic form $$F(x,y,z) = xy - z^2$$ has \textbf{uniformly} virtually recurrent level sets. This  implies Theorem~\ref{thm: example of uniform virtually recurrent level sets} for the quadratic form $F(x,y,z) = xy - z^2$. The second part of Theorem~\ref{thm: example of uniform virtually recurrent level sets} is proved in Section~\ref{Section7}.

We are able to recover this result by demonstrating that if the group is generated by unipotents and the orbit is not contained in any proper affine subspace (this holds for non-zero vectors and irreducible representations in dimension greater than $1$), then the orbit is virtually recurrent.

\begin{mydef} A set $S \subset \bR^r$ is said to be \textit{hyperplane-fleeing} if for all proper affine subspaces $H$ of $\bR^d$ (i.e., $H = W + a$ for some proper vector subspace $W \subset \bR^r$ and $a \in \bR^r$) we have that $S \not\subset H$.

\end{mydef}

\begin{thm} Suppose that $\Gamma \leq \operatorname{GL}_r(\bZ)$ is a subgroup generated by a finite set of unipotents such that for each non-zero $v \in \bZ^r$ the orbit $\Gamma v$ is hyperplane-fleeing (this holds if $r>1$ and the linear action $\Gamma \curvearrowright \bR^r$ is irreducible). Then this action has virtually recurrent orbits.

\end{thm}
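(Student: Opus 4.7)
The plan is to reduce the statement to the uniform Furstenberg--S\'{a}rk\"{o}zy theorem announced in the abstract, applied to a well-chosen polynomial parametrisation of a subset of the orbit $\Gamma v$. The case $v=0$ is trivial (take $\gamma=I$ and $k=1$), so fix $v \in \bZ^r \setminus \{0\}$. Each generator $u_i$ of $\Gamma$ being unipotent, $u_i - I$ is nilpotent and hence $u_i^n = \sum_{k=0}^{r-1} \binom{n}{k} (u_i-I)^k$ is polynomial in $n$ with integer-valued entries. Consequently, for any fixed sequence of indices $j_1, \dots, j_N$, the map
\begin{equation*}
P : \bZ^N \to \bZ^r, \qquad P(a_1, \dots, a_N) = u_{j_1}^{a_1} \cdots u_{j_N}^{a_N} v,
\end{equation*}
is a polynomial map whose image is contained in the orbit $\Gamma v$.

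Next I would arrange the index sequence so that the image of $P$ is itself hyperplane-fleeing. By hypothesis $\Gamma v$ is hyperplane-fleeing, so there exist $\gamma_0, \dots, \gamma_r \in \Gamma$ whose images $\gamma_0 v, \dots, \gamma_r v$ are affinely independent in $\bR^r$. Each $\gamma_i$ is a finite word in $u_1^{\pm 1}, \dots, u_s^{\pm 1}$ (inverses of unipotents remain unipotent). By inserting trivial factors $u_j^0 = I$, every such word can be rewritten with respect to a common long enough concatenated sequence $(j_1, \dots, j_N)$. This yields integer tuples $a^{(0)}, \dots, a^{(r)}$ with $P(a^{(i)}) = \gamma_i v$, so the image $P(\bZ^N)$ contains $r+1$ affinely independent points and is hyperplane-fleeing.

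With such a $P$ in hand, for any measure-preserving $T : \bZ^r \curvearrowright (X,\mu)$ and $B \subset X$ with $\mu(B) > 0$ I would apply the uniform Furstenberg--S\'{a}rk\"{o}zy theorem to $P$ to extract a positive integer $k$ and a tuple $a \in \bZ^N$ with $\mu(B \cap T^{k P(a)} B) > 0$. Since $k P(a) = k \gamma v$ for $\gamma = u_{j_1}^{a_1} \cdots u_{j_N}^{a_N} \in \Gamma$, this gives the desired recurrence inside $k \Gamma v$. The uniformity of $k$ across all $v \in \bZ^r$ demanded by the definition of virtually recurrent orbits should come for free: the polynomial degrees and the number of parameters $N$ depend only on the (finitely many) generators of $\Gamma$ and not on $v$, so the hypotheses of the applied Furstenberg--S\'{a}rk\"{o}zy result hold with parameters bounded uniformly in $v$.

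The main obstacle I anticipate is verifying the non-degeneracy hypothesis of the uniform Furstenberg--S\'{a}rk\"{o}zy theorem for $P$, which the abstract describes as not lying in cosets of submodules of $\bZ^r$ of small index. Hyperplane-fleeing is an $\bR$-affine condition and does not directly preclude containment in a shifted finite-index sublattice of $\bZ^r$. Bridging this gap, presumably by working with differences $P(a) - P(a')$ (which vanish at the origin and whose $\bZ$-span is a $\Gamma$-invariant submodule of $\bZ^r$ forced to be large by the hyperplane-fleeing hypothesis), is the step that must be combined with the density-increment argument and Hua's bound highlighted in the abstract.
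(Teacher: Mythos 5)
The paper does not spell out a proof of this (un-lettered) theorem, so your attempt must stand on its own. Your polynomial parametrisation of the orbit and the device of choosing a word long enough to realise $r+1$ affinely independent points are correct and essentially match the construction the paper carries out in Section~\ref{section: from polynomials to unipotent actions} (one still needs the monomial substitution $n_j\mapsto n^{(r+1)^j}$ there to pass from a multivariate to a single-variable polynomial, but that is a technicality you can borrow verbatim).

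The genuine gap is exactly where you suspect it, but your proposed bridge does not close it. Hyperplane-fleeing forces the $\bZ$-span of the differences $\gamma v - \gamma' v$ to be a \emph{finite-index} sublattice of $\bZ^r$, but it does not bound that index \emph{uniformly in $v$}: replacing $v$ by $mv$ scales every difference by $m$ and multiplies the index by $m^r$. Hence the multiplicative complexity of the polynomial $P_v$ (equivalently the $Q$ for which $\Gamma v$ is $Q$-coset fleeing) is \emph{not} bounded as $v$ ranges over $\bZ^r\setminus\{0\}$. Consequently, applying Theorem~\ref{thm: uniform polynomial recurrence} separately for each $v$ produces a $k$ that depends on $v$ (through $q = q(D,r,Q_v,\cdot)$ in the density increment step), which contradicts the quantifier order in the definition of virtually recurrent orbits: a single $k = k(T,B)$ must serve \emph{all} $v$ simultaneously. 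Your claim that "uniformity of $k$ comes for free" is therefore false, and the density-increment/Hua route cannot supply the missing uniformity.

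For the non-quantitative statement one should not invoke Hua's bound or the density increment at all. Fix an ergodic $(X,\mu,T)$ and $B$ with $\mu(B)>0$, and expand $\mathds{1}_B$ in the rational Kronecker factor as $\mu(B)+\sum_{\chi\neq 1}c_\chi\rho_\chi$ plus a part $f$ orthogonal to it. Since $\sum_\chi |c_\chi|^2<\infty$, choose $q_0=q_0(B)$ so that $\sum_{|\mathrm{Image}(\chi)|>q_0}|c_\chi|^2 < \mu(B)^4/4$, say, and set $k=q_0!$. Then for every $w\in\bZ^r$ and every rational $\chi$ of order $\leq q_0$ we have $\chi(kw)=1$, so $T^{kw}$ fixes the low-order eigenfunctions pointwise. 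For any $v\neq 0$, take a hyperplane-fleeing polynomial $P_v$ parametrising part of $\Gamma v$; then $k P_v$ still has no nonconstant real linear combination equal to a constant, so by Lemma~\ref{lemma: polynomial average zero on irrational spectrum} the contribution of $f$ to $\frac{1}{N}\sum_{n\leq N}T^{kP_v(n)}\mathds{1}_B$ vanishes in $L^2$, the low-order part is fixed and contributes nonnegatively to $\langle \mathds{1}_B,\cdot\rangle$, and the high-order part contributes at most $\mu(B)^2/2$ in absolute value. Hence $\limsup_N \frac{1}{N}\sum_n \mu(B\cap T^{kP_v(n)}B) \geq \mu(B)^2 - \mu(B)^2/2 > 0$, giving recurrence along $k\Gamma v$ with a $k$ that depends only on $B$. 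This is both simpler than and logically independent of Theorem~\ref{thm: uniform polynomial recurrence}; the Hua bound enters only when one wants $k$ to depend merely on the density $\epsilon$ rather than on $B$ itself, which requires the additional coset-fleeing hypothesis of Theorem~\ref{thm: uniformly virtually recurrent orbits for unipotents}.
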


Our next main result provides a quantitative improvement by demonstrating that if the orbit also flees cosets of subgroups of large index sufficiently quickly, then we have \textbf{uniform} virtual recurrence.

\begin{mydef} Let $\Lambda$ be an abelian group. We say that $S \subset \Lambda$ is \textit{$Q$-coset fleeing (in $\Lambda$)} if for all subgroups $W \leq \Lambda$  with index $|\Lambda/W| > Q$ we have that $S$ is not contained in a coset $a+W$ of $W$, i.e., the image of $S$ in the quotient $\Lambda/W$ contains at least two elements.
\end{mydef}

\begin{mainthm} \label{thm: uniformly virtually recurrent orbits for unipotents} Let $\epsilon>0$ and $r, N, Q$ be positive integers. Then there exists a positive integer $k_0 = k_0(r, N, Q, \epsilon)$ such that for all measure preserving systems $T:\bZ^r \curvearrowright (X,\mu)$ and $B \subset X$ with $\mu(B) > \epsilon$ there exists
 $0 < k \leq k_0$ such that the following holds: Suppose that there exist $N$ unipotent elements $u_1, \ldots, u_N \in \operatorname{SL}_r(\bZ)$ and $v \in \bZ^r$ such that $$\mathcal{S} =  \{u_1^{n_1} \cdots u_N^{n_N} ~|~ n_1, \ldots, n_N \in \bZ \}$$ satisfies the property that $\mathcal{S} v$ is $Q$-coset fleeing in $\bZ^r$ and $\mathcal{S}v$ is also hyperplane-fleeing. Then $$\mu(B \cap T^{k\gamma v}B) > 0 \quad \text{for some } \gamma \in \mathcal{S}.$$
\end{mainthm}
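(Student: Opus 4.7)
My plan is to parametrize the orbit $\mathcal{S}v$ as the image of a polynomial map $P \colon \bZ^N \to \bZ^r$ of bounded degree, analyze the Ces\`aro average $L^{-N}\sum_n \mu(B \cap T^{kP(n)} B)$ via the spectral theorem for $\bZ^r$-actions, and use the $Q$-coset fleeing and hyperplane-fleeing hypotheses together with the paper's uniform Furstenberg-S\'ark\"ozy theorem to extract a positive limit for $k$ bounded in $r, N, Q, \epsilon$. Since each $u_i = I + M_i$ with $M_i$ nilpotent satisfies $u_i^{n_i} = \sum_{j=0}^{r-1}\binom{n_i}{j} M_i^j$, the map $P(n_1, \ldots, n_N) := u_1^{n_1}\cdots u_N^{n_N} v$ is an integer polynomial of total degree at most $(r-1)N$ with $P(0) = v$ and image $\mathcal{S}v$. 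Writing $P(n) - v = \sum_{I \neq 0} a_I \binom{n_1}{I_1}\cdots\binom{n_N}{I_N}$ in the multivariate binomial basis with $a_I \in \bZ^r$ and $\Lambda := \langle a_I\rangle_\bZ = \langle \mathcal{S}v - v\rangle_\bZ$, hyperplane-fleeing forces $\Lambda$ to have full rank $r$ in $\bZ^r$, while applying $Q$-coset fleeing to the subgroup $\Lambda$ itself yields $[\bZ^r : \Lambda] \leq Q$.

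Let $\sigma_B$ be the spectral measure of $1_B \in L^2(X, \mu)$ on $\bT^r$, so that $\mu(B \cap T^w B) = \int_{\bT^r} e(w \cdot \xi)\, d\sigma_B(\xi)$ and $\sigma_B(\{0\}) \geq \mu(B)^2 > \epsilon^2$ by the mean ergodic theorem. It suffices to find $k \leq k_0(r, N, Q, \epsilon)$ for which
\[
A(k) := \lim_{L \to \infty} \frac{1}{L^N}\sum_{n \in [1, L]^N} \mu(B \cap T^{k P(n)} B) = \int_{\bT^r} \Phi^k(\xi)\, d\sigma_B(\xi) > 0,
\]
where $\Phi^k(\xi) := \lim_{L \to \infty} L^{-N}\sum_n e(k P(n) \cdot \xi)$ is the pointwise polynomial Weyl limit and the integral exchange is justified by dominated convergence ($|\Phi^k_L| \leq 1$ and $\sigma_B$ is finite). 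For non-torsion $\xi$, the full-rank condition on $\Lambda$ guarantees some coefficient $k(a_I \cdot \xi)$ is irrational, so $\Phi^k(\xi) = 0$; for torsion $\xi$, $\Phi^k(\xi)$ is a periodic average of modulus at most $1$, equal to $e(kv \cdot \xi)$ precisely when $k\xi$ lies in the finite subgroup $\Lambda^*/\bZ^r \leq \bT^r$ (of order at most $Q$), and otherwise a non-trivial Gauss-type sum.

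The atom at $\xi = 0$ contributes $+\sigma_B(\{0\}) \geq \epsilon^2$ to $A(k)$, so the remaining task is to dominate the contributions from other torsion atoms. Setting $\alpha_j := \sigma_B(\{\xi : (Q!)^j \xi = 0\})$ produces a non-decreasing sequence in $[0, 1]$ with $\alpha_0 \geq \epsilon^2$; by pigeonhole there is $j^* = O(1/\epsilon^2)$ with increment $\alpha_{j^*+1} - \alpha_{j^*}$ arbitrarily small. Choosing $k := (Q!)^{j^*}$, the paper's uniform Furstenberg-S\'ark\"ozy theorem (derived from a density-increment argument and Hua's bound on polynomial exponential sums) supplies the quantitative gap $|\Phi^k(\xi)| \leq 1 - c(r, N, Q)$ at every \emph{resistant} torsion atom, namely those $\xi$ with $k\xi \neq 0$. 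Combined with the vanishing non-torsion contribution and tuning the pigeonhole scale so that $\alpha_{j^*+1} - \alpha_{j^*} \leq c\epsilon^2/2$, one obtains $A(k) \geq \epsilon^2/2 > 0$, hence some $n$ with $\mu(B \cap T^{k P(n)} B) > 0$, and so $\gamma := u_1^{n_1}\cdots u_N^{n_N} \in \mathcal{S}$ has the required property.

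\textbf{Main obstacle.} The heart of the proof is controlling the resistant torsion atoms: the naive spectral bound $|\Phi^k(\xi)| \leq 1$ allows torsion contributions of arbitrarily high order to in principle cancel the base contribution $\sigma_B(\{0\}) \geq \epsilon^2$. The paper's uniform Furstenberg-S\'ark\"ozy theorem is precisely designed to supply the necessary quantitative refinement $|\Phi^k(\xi)| \leq 1 - c$ via Hua's bound, and this is what introduces the dependence of $k_0$ on $r$ and $N$ through the polynomial's degree $(r-1)N$.
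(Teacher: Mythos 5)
Your high-level strategy (spectral decomposition, separating atoms by order, controlling the tail with Hua) is in the right spirit, and some of the individual observations are correct: $\alpha_0 \geq \mu(B)^2 > \epsilon^2$, the contribution from atoms with $k\xi = 0$ is exactly $+\alpha_{j^*}$, the non-torsion contribution vanishes by Weyl (using full rank of $\Lambda$), and hyperplane-fleeing together with $Q$-coset-fleeing does give $\operatorname{rk}\Lambda = r$ and $[\bZ^r:\Lambda]\leq Q$. But the core mechanism you propose for killing the resistant torsion contribution does not work, and the argument does not close even if it did.

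The crucial false step is the claimed uniform gap $|\Phi^k(\xi)| \le 1 - c(r,N,Q)$ at \emph{every} torsion atom with $k\xi \neq 0$. Hua's bound (Proposition~\ref{prop: multivariate Hua bound}) gives $|\Phi^k(\xi)| \le \epsilon_{r,D,Q}(\mathrm{ord}(k\xi))$ with $\epsilon_{r,D,Q}(q') \to 0$ as $q'\to\infty$; for $q'$ small (but $>1$) this is vacuous, and no uniform bound of the form $1-c$ is available. Worse, even granting such a gap, the accounting fails: the $\sigma_B$-mass of the resistant atoms can be as large as $\sigma_B(\bT^r) - \alpha_{j^*} \le \mu(B) - \epsilon^2$, so the negative contribution can reach $(1-c)(\mu(B)-\epsilon^2)$, which swamps $\alpha_{j^*}\gtrsim \epsilon^2$ for small $\epsilon$ unless $c$ is extremely close to $1$ — which Hua certainly does not supply. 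What you actually need is for \emph{all} resistant atoms not killed by one extra pigeonhole step to have $\mathrm{ord}(k\xi)$ so large that Hua makes $|\Phi^k|\le \epsilon_0$ with $\epsilon_0 < \epsilon^2/2$. This forces the pigeonhole scale to be $q=\operatorname{lcm}(1,\ldots,q_0)$ with $q_0$ determined by $r,D,Q,\epsilon$, not $Q!$: with step $Q!$, an atom whose order has a prime factor bigger than $Q$, or a modest prime power not dividing $Q!$, survives every step and has small order, which Hua cannot handle. (The group $\Lambda^{\perp}$ does have exponent dividing $Q!$, so your scale controls the ``degenerate'' atoms where $k\xi$ annihilates $\Lambda$, but not the non-degenerate small-order ones.) This is exactly the role of the $(q,\delta)$-equidistribution and measure increment passage in Lemma~\ref{lemma: measure increment} and Proposition~\ref{prop: equidist implies small proj}, for which your spectral-energy pigeonhole is a morally related but miscalibrated substitute.

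A second, independent issue: you work with the multivariate average over $[1,L]^N$, which would require a multivariate analogue of Hua's bound for the polynomial $S(n_1,\ldots,n_N)\cdot\vec a$. The paper sidesteps this entirely via the substitution $n_j\mapsto n^{(r+1)^j}$ (Section~\ref{section: from polynomials to unipotent actions}), which is injective on the monomials occurring in $S$, so the resulting one-variable polynomial has the same set of coefficients and hence the same multiplicative complexity, and the one-variable Hua bound applies directly. This is also why the degree entering $k_0$ is $(r+1)^{N+1}-r-1$ rather than your $(r-1)N$, and why the statement is reduced to the single-variable Theorem~\ref{thm: uniform polynomial recurrence} rather than proved directly.
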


Thus, the results stated above on the uniformity for the adjoint representation will follow from establishing uniform bounds on $N$ and $Q$ for the orbits of the companion matrices.

\subsection{A uniform Furstenberg-S\'{a}rk\"{o}zy theorem}

Our main technical tool, which may be of independent interest, is a quantitative uniform Furstenberg-S\'{a}rk\"{o}zy theorem that works for a large family of polynomials which do not necessarily have zero constant term.

\begin{mydef} We say that a vector of integer coefficient polynomials $P(n) = (P_1(n), \ldots, P_r(n))$ where $P_i(n) \in \bZ[n]$ has \textit{multiplicative complexity} $Q$ if for all $\vec{a} = (a_1, \ldots, a_r) \in \bZ^r$ and $q \in \bZ$ with $\text{gcd}(a_1, \ldots, a_r, q) = 1$ we have that the polynomial $$\sum_{j=1}^D b_j n^j = (P(n) - P(0)) \cdot \vec{a}$$ satisfies $\text{gcd}(b_1, \ldots, b_D, q) \leq Q$.

\end{mydef}

\begin{remark} For the case $r=1$, a polynomial $P(n) = \sum_{j=0}^D c_j D^j$ has multiplicative complexity $$\text{gcd}(c_1, \ldots, c_D).$$ \end{remark}

\begin{mainthm} \label{thm: uniform polynomial recurrence} Let $D,r , Q$ be positive integers and $\epsilon > 0$. There exists a positive integer $k_0 = k_0(D, r, Q, \epsilon)$ such that the following is true:  Let $T:\bZ^r \curvearrowright (X,\mu)$ be an ergodic measure preserving system and suppose that $B \subset X$ with $\mu(B) > \epsilon$. Then there exists a positive integer $k = k(B) \leq k_0$ such that whenever $P:\bZ \to \bZ^r$ is a polynomial $P(n) = (P_1(n), \ldots, P_r(n))$ with degree at most $D$ (that is, $P_i(n) \in \bZ[n]$ with $deg(P_i) \leq D$) such that $P(n)$ has multiplicative complexity $Q$ and $P$ is hyperplane-fleeing, then there exists arbitrarily large $n \in \bZ$ such that $$\mu(T^{kP(n)}B \cap B) > 0.$$

\end{mainthm}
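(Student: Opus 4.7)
The plan is to establish a positive lower bound on the Ces\`aro average $\frac{1}{N}\sum_{n=1}^N\mu(T^{kP(n)}B\cap B)$, from which the existence of arbitrarily large $n$ with $\mu(T^{kP(n)}B\cap B)>0$ follows automatically since the summands are nonnegative. By the spectral theorem applied to the unitary $\bZ^r$-action induced by $T$, one has $\mu(T^v B\cap B)=\int_{\bT^r}e(v\cdot\theta)\,d\sigma(\theta)$ for the spectral measure $\sigma$ of $\mathbf{1}_B$, and ergodicity gives $\sigma(\{0\})=\mu(B)^2>\epsilon^2$. Bounded convergence plus pointwise Weyl equidistribution yield
\begin{equation*}
\lim_{N\to\infty}\frac{1}{N}\sum_{n=1}^N\mu(T^{kP(n)}B\cap B)=\int_{\bT^r}L(k,\theta)\,d\sigma(\theta), \quad L(k,\theta):=\lim_{N\to\infty}\frac{1}{N}\sum_{n=1}^N e(kP(n)\cdot\theta).
\end{equation*}

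Next I would classify $L(k,\theta)$. Writing $kP(n)\cdot\theta$ as a polynomial in $n$, its non-constant coefficients are $k(\vec{c}_j\cdot\theta)$ where $\vec{c}_j\in\bZ^r$ are the coefficient vectors of $P$; the hyperplane-fleeing hypothesis makes $\theta\mapsto(\vec{c}_j\cdot\theta)_{j=1}^D$ an injective $\bQ$-linear map $\bR^r\to\bR^D$, whose preimage of $\bQ^D$ is exactly $\bQ^r$, so any irrational $\theta$ produces an irrational coefficient and hence $L(k,\theta)=0$. This kills the continuous part of $\sigma$ and all irrational atoms. For a rational atom $\lambda=\vec{p}/q$ in lowest terms, $L(k,\lambda)=1$ when $q\mid k$; otherwise, setting $q':=q/\gcd(k,q)>1$ and using the multiplicative-complexity assumption (which, since $\gcd(\vec{p},q)=1$, gives $\gcd(b_1,\ldots,b_D,q')\leq Q$ for the non-constant coefficients of $\vec{p}\cdot P(n)$), the Hua-Weyl bound for complete polynomial exponential sums yields $|L(k,\lambda)|\leq C(D)(Q/q')^{c(D)}$.

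I would then fix $M_0=M_0(Q,D,\epsilon)$ with $C(D)(Q/M_0)^{c(D)}\leq\epsilon^2/10$ and build $k$ iteratively. Starting from $k=1$, at each stage collect all rational atoms $\lambda_\chi$ with $q_\chi\nmid k$ and $q_\chi/\gcd(k,q_\chi)\leq M_0$; if their total $\sigma$-mass is at least $\epsilon^2/10$, replace $k$ by $\mathrm{lcm}(k,q_\chi:\text{such }\chi)$ and iterate, otherwise halt. A prime-by-prime analysis gives $v_p(q_\chi)\leq v_p(k)+v_p(M_0)$ for every prime $p$, so each update multiplies $k$ by at most $M_0$; each update also absorbs at least $\epsilon^2/10$ extra mass into $G_k:=\{\lambda:q_\lambda\mid k\}$, and since $\sigma(G_k)\leq 1$, the loop halts after at most $10/\epsilon^2$ iterations, giving $k\leq M_0^{10/\epsilon^2}=:k_0(D,Q,\epsilon)$.

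At termination, $\int L(k,\theta)\,d\sigma$ splits as: (i) atoms in $G_k$ contribute $\sigma(G_k)\geq\mu(B)^2\geq\epsilon^2$; (ii) rational atoms with $q'_\chi\leq M_0$ but $q_\chi\nmid k$ carry total mass below $\epsilon^2/10$, so their contribution is bounded in magnitude by $\epsilon^2/10$; (iii) rational atoms with $q'_\chi>M_0$ each have $|L|\leq\epsilon^2/10$ and hence together contribute at most $\epsilon^2/10$; (iv) irrational atoms and the continuous part contribute zero. Thus $\int L\,d\sigma\geq\epsilon^2-\epsilon^2/5>0$, finishing the argument. The main obstacle is the precise form of the Hua-Weyl bound with the explicit $(Q/q')^{c(D)}$ dependence on the coefficient gcd, which the multiplicative-complexity hypothesis is designed exactly to unlock (crucially using that $k/\gcd(k,q)$ is coprime to $q'$ so the coefficient gcd is preserved modulo $q'$); a secondary subtlety is the simultaneous quantitative control of both $k$ and the iteration count, which forces the cutoff $M_0$ to be chosen before running the density-increment loop and then exploits the prime-by-prime lcm bound so that a bounded number of updates cannot blow up $k$.
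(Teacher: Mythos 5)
Your argument is sound and reaches the theorem, but via a route that is presented somewhat differently from the paper. The paper first establishes a quantitative polynomial mean ergodic theorem (Theorem~\ref{polynomial mean ergodic thm}) using the $(q,\delta)$-equidistribution apparatus of Section~3: one fixes $q=\mathrm{lcm}\{1,\ldots,q_0\}$ upfront so that the Hua bound (Proposition~\ref{prop: multivariate Hua bound}) is small for all characters of order $\geq q_0$, then uses the ergodic measure-increment Lemma~\ref{lemma: measure increment} to pass to a $T^k$-ergodic component $\nu$ in which $B$ is $(q,\delta)$-equidistributed, with $k$ a bounded power of $q$, and finally applies Cauchy--Schwarz. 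You instead bypass the mean ergodic theorem and the ergodic-component bookkeeping, working directly with the spectral measure $\sigma$ of $\mathds{1}_B$ and computing the Ces\`aro average $\int_{\bT^r}L(k,\theta)\,d\sigma(\theta)$; the density increment is run on the $\sigma$-mass absorbed into $G_k$, with $k$ built up as an lcm of the denominators encountered. The ingredients -- Hua's bound, Weyl equidistribution, ergodicity giving $\sigma(\{0\})=\mu(B)^2$, and the observation that hyperplane-fleeing forces $L(k,\theta)=0$ at irrational $\theta$ -- are the same, and your classification of $L(k,\theta)$ across rational atoms, irrational atoms and the continuous part mirrors the paper's decomposition of $P_{\mathrm{Rat}}\mathds{1}_B$ into the constant, $R^*_q$, and high-order-rational pieces. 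Your version is a cleaner, more spectral presentation of essentially the same proof; the paper's version packages the increment into the $(q,\delta)$-equidistribution framework, which it reuses elsewhere.

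One small correction in the prime-by-prime step: $q'_\chi\leq M_0$ does not imply $v_p(q'_\chi)\leq v_p(M_0)$ (take $q'_\chi$ a prime not dividing $M_0$), so $v_p(q_\chi)\leq v_p(k)+v_p(M_0)$ is not justified as written. What does hold is $q'_\chi\mid M_0!$, hence $q_\chi=q'_\chi\cdot\gcd(k,q_\chi)\mid k\cdot M_0!$ for each collected $\chi$, so each update multiplies $k$ by at most $M_0!$ and the final bound should read $k\leq (M_0!)^{10/\epsilon^2}$. Since $M_0$ depends only on $(D,Q,\epsilon)$, this still gives a uniform $k_0(D,r,Q,\epsilon)$ and does not affect the theorem.
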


Note that if we were to add the condition $P(0) = 0$ or just the intersectivity of the polynomial map $P$, i.e., for every $k \ge 1$ there exists $n \ge 1$ with $P(n) \in k\bZ^d$,  then the result holds true with $k=1$. The case $P(0) = 0$ is a classical the  Furstenberg-S\'{a}rk\"{o}zy theorem,  the intersective polynomial case for one variable has been proved by Kamae and Mend\'es-France \cite{KM}, while the multi-variable case was done in \cite{BLL}.

Moreover, if we were allowed to change $k$ (and even insist the bound $k \leq \epsilon^{-1} + 1$) for different polynomials $P$ then the result trivially follows from the Poincar\'e Recurrence theorem. We cannot afford these relaxations in the proof of Theorem~\ref{thm: uniformly virtually recurrent orbits for unipotents} from Theorem~\ref{thm: uniform polynomial recurrence} given in Section~\ref{section: from polynomials to unipotent actions}.

\begin{remark} We now demonstrate that we can not remove in Theorem~\ref{thm: uniform polynomial recurrence} the assumption that $P(n)$ has multiplicative complexity $Q$. To show this, suppose for contradiction that we could. Thus if we fix a positive integer $m \geq 3$ and choose $0 < \epsilon < \frac{1}{m}$ then this would mean that there exists a positive integer $k_0$ such that for every measure preserving system $T: \bZ \curvearrowright (X,\mu)$ we would have that whenever $B \subset X$ with $\mu(B) = \frac{1}{m}$ we would have a positive integer $k = k(B) \leq k_0$ such that for all integers $a_0, a_1$ with $a_1 \neq 0$ we would have $$\mu(B \cap T^{k(a_1n + a_0)}B) > 0 \quad \text{for some } n \in \bZ.$$ That is, we have chosen to focus on $\bZ$ systems and polynomials of degree $1$. Let $X = \bZ / mk_0 \bZ$ with uniform probability measure (Haar measure) and let $Tx = x + 1$. Now let $B = \{0, \ldots, k_0 - 1\} \subset X$, which has measure exactly $\frac{1}{m}$. Now choose $a_1 = k_0m$ and thus for all integers $n$ and $0 < k \leq k_0$ we have $$T^{k(a_1n + a_0)} B = T^{ka_0}B.$$ But since $|X| \geq 3k$, we can choose a suitable $a_0$ such that $T^{ka_0}B$ is disjoint from $B$, which is a contradiction.
\end{remark}
%\subsection{A polynomial Bogolyubov Theorem}

\section{Polynomial exponential bounds}

Throughout this paper, we let $e(t) = \exp(2\pi i t)$. We begin with a classical bound of Hua.

\begin{thm}[\cite{Hua}, see also \cite{HuaBook}]For $\epsilon > 0$ and positive integers $d$ there exists a constant $C_{d, \epsilon}$ such that if $f = a_0 + a_1 x + \cdots + a_d x^d \in \bZ[x]$ is a polynomial and $q$ is a positive integer such that $gcd(a_1, \ldots, a_d ,q) = 1$ then $$\left| \frac{1}{q} \sum_{n=1}^q e \left(\frac{f(n)}{q} \right) \right| \leq C_{d, \epsilon} q^{\epsilon - \frac{1}{d}}. $$
\end{thm}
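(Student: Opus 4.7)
The plan is to combine the multiplicativity of the exponential sum under coprime factorizations of the modulus with a $p$-adic substitution argument, bootstrapped from the Weil bound at prime modulus. Write $S(q,f) = \sum_{n=1}^q e(f(n)/q)$. First I would invoke the Chinese Remainder Theorem: if $q = q_1 q_2$ with $\gcd(q_1, q_2) = 1$, then a linear change of variables gives $|S(q,f)| = |S(q_1, f_1)| \cdot |S(q_2, f_2)|$, where each polynomial $f_i$ inherits the coprimality hypothesis modulo $q_i$. This reduces the problem to prime power moduli $q = p^k$.

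For $q = p$ prime, the Weil bound on character sums over $\bF_p$ yields $|S(p,f)| \leq (d-1) p^{1/2}$, which already beats the target $p^{1 - 1/d + \epsilon}$ comfortably once $d \geq 2$. The substantive case is $q = p^k$ with $k \geq 2$, which I would handle by induction on $k$ using the substitution $n = m + p^\ell t$ with $0 \leq m < p^\ell$ and $0 \leq t < p^{k-\ell}$, for a carefully chosen parameter $\ell \approx k/d$. Taylor expanding $f(m + p^\ell t)$ around $m$, the inner sum over $t$ takes the shape
$$\sum_{t=0}^{p^{k-\ell}-1} e\!\left( \frac{ p^\ell f'(m) t + \tfrac{1}{2} p^{2\ell} f''(m) t^2 + \cdots }{p^k} \right),$$
and once $\ell$ is large enough that the higher order terms collapse modulo $p^k$ (so the phase becomes linear in $t$), this becomes a complete geometric series vanishing unless $p^{k-\ell}$ divides $f'(m)$ (or unless an appropriate higher derivative dominates, depending on valuations).

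The main obstacle will be the accompanying case analysis in $m$: one must split the outer sum according to the $p$-adic valuations $v_p(f^{(j)}(m))$ and optimize $\ell$ to balance the trivial bound $p^{k-\ell}$ on the $t$-sum against the number of residues $m$ for which some derivative has small valuation. The hypothesis $\gcd(a_1, \ldots, a_d, p) = 1$ is essential here, since it guarantees that at least one derivative $f^{(j)}$ does not vanish identically modulo $p$, which in turn bounds the count of bad $m$ via a polynomial root-counting argument over $\bF_p$. The exponent $1/d$ in the final estimate emerges from balancing this trade-off across the $d$ derivatives, and is known to be essentially sharp. This is precisely the scheme of Hua's original proof \cite{Hua}; a polished modern exposition is given in \cite{HuaBook}, so for the present paper I would simply cite the bound in its stated form.
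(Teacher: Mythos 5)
The paper offers no proof of this theorem---it is quoted as a classical result of Hua \cite{Hua}, \cite{HuaBook}---so your decision to cite rather than reprove is exactly what the paper does, and your sketch of the standard CRT-plus-prime-power-descent scheme (reduce to $q = p^k$, substitute $n = m + p^{\ell}t$, Taylor expand, and do a case analysis on $p$-adic valuations of the derivatives $f^{(j)}(m)$) is consistent with modern expositions. One small historical correction: Hua's 1940 argument could not have invoked the Weil bound, which postdates it; the prime case in Hua's treatment rests instead on Mordell's elementary estimate $|S(p,f)| \ll_d p^{1-1/d}$, which is weaker than Weil's $p^{1/2}$ but already suffices for the inequality as stated.
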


We deduce a straightforward higher dimensional generalization that will be useful for us.

We let $\bS^1 = \{ z \in \bC ~|~ |z|=1 \}$ be the multiplicative group of unit complex numbers. A character is a group homomorphism $ \chi:\bZ^r \to \bS^1, $ i.e., $\chi(x + y) = \chi(x) \chi(y)$. Note that the image of $\chi$ has exactly $q$ elements if and only if it is of the form $$\chi(x_1, \ldots, x_r) = e\left(\frac{1}{q}(a_1x_1 + \cdots + a_rx_r) \right)$$ where $gcd(a_1, \ldots a_r,q ) = 1$.

\begin{prop} \label{prop: multivariate Hua bound} For $r, D, Q>0$ there exists a function $\epsilon_{r,D, Q}: \bN \to \bR$ with $\lim_{q \to \infty} \epsilon_{r,D, Q} (q) = 0$ such that the following is true: Suppose that $P: \bZ \to \bZ^r$ is a polynomial function of degree at most $D$, more precisely $P(n) = (P_1(n), \ldots, P_r(n))$ with $P_i(n) \in \bZ[n]$ with $\text{deg}(P_i) \leq D$. Suppose that $P(n)$ has multiplicative complexity $Q$. Then for all positive integers $q$ and characters $\chi:\bZ^r \to \bS^1$ with image of cardinality at least $q$ we have that $$\left| \frac{1}{q} \sum_{n=1}^q \chi (P(n)) \right| \leq \epsilon_{r,D, Q} (q).$$

\end{prop}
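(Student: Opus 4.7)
The plan is to reduce $\chi \circ P$ to a classical one-variable polynomial exponential sum modulo some integer $m$, apply Hua's bound, and unfold via the multiplicative complexity hypothesis to recover the desired estimate in $q$.

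First, I parameterise $\chi$: since $\chi$ has image of finite cardinality $q' \geq q$, it factors through $\bZ^r / q' \bZ^r$ and takes the form $\chi(x) = e(\langle \vec a, x\rangle / q')$ for some $\vec a = (a_1, \ldots, a_r)$ with $\gcd(a_1, \ldots, a_r, q') = 1$. Since $\chi$ is a homomorphism,
$$\chi(P(n)) = \chi(P(0)) \cdot e(f(n)/q'), \qquad f(n) := (P(n) - P(0)) \cdot \vec a = \sum_{j=1}^D b_j n^j.$$

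Next, the multiplicative complexity hypothesis yields $d := \gcd(b_1, \ldots, b_D, q') \leq Q$. Writing $q' = dm$ and $b_j = d c_j$, we obtain $\gcd(c_1, \ldots, c_D, m) = 1$, and hence
$$\chi(P(n)) = \chi(P(0)) \cdot e(g(n)/m), \qquad g(n) := \sum_{j=1}^D c_j n^j,$$
with $m \geq q'/Q \geq q/Q$. The map $n \mapsto e(g(n)/m)$ is now periodic of period dividing $m$, and Hua's bound applied to $g$ with modulus $m$ gives, for every $\delta > 0$,
$$\Bigl|\tfrac{1}{m}\sum_{n=1}^{m} e(g(n)/m)\Bigr| \leq C_{D,\delta}\, m^{\delta - 1/D}.$$

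Finally, I convert this complete-period bound into one on $\tfrac{1}{q}\sum_{n=1}^q \chi(P(n))$. In the principal case where $\{1,\ldots,q\}$ decomposes into complete periods of length $m$ (in particular when $q=q'$, where $q'/m = d \leq Q$), splitting the sum yields
$$\Bigl|\tfrac{1}{q}\sum_{n=1}^q \chi(P(n))\Bigr| \leq C_{D,\delta}\, m^{\delta - 1/D} \leq C_{D,\delta}\, Q^{1/D - \delta}\, q^{\delta - 1/D},$$
which we take as $\epsilon_{r,D,Q}(q)$ and which tends to $0$ upon choosing $\delta < 1/D$. An incomplete final period (when $q$ is not a multiple of $m$) is handled by completing the partial sum via the discrete Fourier expansion of $\mathds{1}_{[1,q]}$ on $\bZ/m\bZ$, whose Fourier coefficients are bounded by $\min(q, m/|j|)$, reducing the estimate to a weighted average of complete sums of the form $\sum_{n=1}^m e((g(n)+jn)/m)$.

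The main obstacle is this last completion step, because after adding a linear shift $jn$ to $g(n)$ the gcd-coprimality used to apply Hua's bound need not persist; the saving grace is that the multiplicative complexity controls the worst-case loss by a further factor of at most $Q$, keeping the relevant modulus comparable to $q$ throughout so that Hua's savings survive summation over $j$ (up to an additional $O(\log m)$ factor, which is absorbed into $\epsilon_{r,D,Q}(q)$).
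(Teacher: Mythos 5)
Your argument for what you call the principal case (when $q$ equals the cardinality of the image of $\chi$, or more generally when $q$ is a multiple of the reduced modulus $m$) is exactly the paper's proof: write $\chi(x)=e(\vec a\cdot x/q)$ with $\gcd(\vec a,q)=1$, form $f(n)=(P(n)-P(0))\cdot\vec a$, use multiplicative complexity to bound $d=\gcd(b_1,\ldots,b_D,q)\le Q$, divide out $d$, and apply Hua to the reduced polynomial modulo $m=q/d$. The resulting bounds $C_{D,\delta}Q^{1/D-\delta}q^{\delta-1/D}$ and $C_{D,\epsilon}(q/Q)^{\epsilon-1/D}$ are the same. The paper's own proof in fact opens with the sentence ``Suppose that $\chi$ has cardinality exactly $q$'' and never returns to the case of strict inequality, and when the proposition is invoked in the proof of Theorem~\ref{polynomial mean ergodic thm} it is only applied with $q$ equal to the exact cardinality of the image.

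The extra work you do for the incomplete-period case ($q$ strictly less than the image cardinality $q'$) contains a real gap, and you should not try to close it, because that case of the stated proposition is in fact false. You correctly observe that after completing $\sum_{n=1}^q e(g(n)/m)$ and shifting by a linear term $jn$, the coprimality condition $\gcd(c_1+j,c_2,\ldots,c_D,m)\le Q$ needed for Hua can fail; but the claim that multiplicative complexity of $P$ ``controls the worst-case loss by a further factor of at most $Q$'' is not justified, since multiplicative complexity constrains $\gcd(b_1,\ldots,b_D,q)$ but says nothing about $\gcd(b_2,\ldots,b_D,q)$, which is what survives after a bad linear shift. Concretely, take $r=1$, $P(n)=n+Ln^2$ with $L$ large, which has multiplicative complexity $\gcd(1,L)=1$, and let $\chi(x)=e(x/L)$, whose image has cardinality $L$. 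Then $\chi(P(n))=e(n/L)$, and for $q=\lfloor L/2\rfloor<L$ the average $\frac{1}{q}\sum_{n=1}^q e(n/L)$ has magnitude about $2/\pi$ independently of $L$, so no function $\epsilon_{1,2,1}(q)\to 0$ can bound it. The ``at least $q$'' in the statement should be read as ``equal to $q$''; with that reading your argument (restricted to the principal case) is complete and coincides with the paper's.
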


\begin{proof} Suppose that $\chi$ has cardinality exactly $q$. Hence $$\chi(x_1, \ldots, x_r) = e\left(\frac{1}{q}(a_1x_1 + \cdots + a_rx_r) \right)$$ where $gcd(a_1, \ldots a_r,q ) = 1$. Let $\vec{a} = (a_1, \ldots, a_r)$ and write $$P(n) \cdot \vec{a} = P(0) \cdot \vec{a} + \sum_{j=1}^{D'} b_j n^j$$ where $D' \leq D$. Let $q' = gcd(b_1, \ldots, b_{D'}, q) \leq Q$. Then by the Hua bound we have that

\begin{align*} \left| \frac{1}{q} \sum_{n=1}^q \chi (P(n)) \right| &= \left| \frac{1}{q} \sum_{n=1}^q e\left( \frac{1}{q} P(n) \cdot \vec{a} \right) \right| \\ 
&= \left| \frac{1}{q} \sum_{n=1}^q e\left( \frac{1}{q} \sum_{j=1}^{D'} b_j n^j \right)  \right| \\
&= \left| \frac{1}{q} \sum_{n=1}^q e\left( \frac{1}{q/q'} \sum_{j=1}^{D'} \frac{b_j}{q'} n^j \right)  \right| \\
&\leq C_{D', \epsilon} (q/q')^{\epsilon - \frac{1}{D'}} \\
&\leq C_{D, \epsilon} (q/Q)^{\epsilon - \frac{1}{D}}. \end{align*}

 \end{proof}

\section{$(q,\delta)$-Equidistributed sets.}

In this section, we introduce the notion of a $(q,\delta)$-equidistributed subset of an ergodic system, which was initially developed by the first author in \cite{BulinskiSpherical} in order to obtain a quantitative ergodic version of Magyar's theorem. Initially, the notion of $(q,\delta)$-equidistributed subsets of $\bZ^d$ was introduced by Lyall and Magyar in \cite{LM}. We include it again for the sake of completeness.

For the remainder of this section, let $F_N=[1,N]^d \cap \bZ^d$.

\begin{mydef} \label{def: equidistributed} Let $T: \bZ^d \curvearrowright (X,\mu)$ be an ergodic measure preserving action. Then we say that $B \subset X$ is \textit{$(q,\delta)$-equidistributed} if for almost all $x \in X$ we have $$ \lim_{n \to \infty} \frac{1}{|F_n|}\left | \{a \in F_n \text{ } | \text{ } T^{qa}x \in B \}\right| \leq (1+\delta)\mu(B). $$

\end{mydef}

\begin{mydef}[Conditional probability and ergodic components] \label{def: ergodic components of finite index} If $(X,\mu)$ is a probability space and $C \subset X$ is measurable with $\mu(C)>0$ then we define the conditional probability measure $\mu(\cdot | C)$ given by $\mu(B|C)=\frac{\mu(B \cap C)}{\mu(C)}$. We note that if $C$ is invariant under some measure preserving action, then $\mu(\cdot |C)$ is also preserved by this action. If $T: \bZ^d \curvearrowright (X,\mu)$ is ergodic and $k$ is a positive integer, then the action $T^k: \bZ^d \curvearrowright(X,\mu)$ may not be ergodic; but it is easy to see that there exists a $T^k$-invariant subset $C \subset X$ such that the action of $T^k$ on $C$ is ergodic (more precisely, $\mu( \cdot | C)$ is $T^k$-ergodic) and the translates of $C$ disjointly cover $X$ (there are at most $k^d$ distinct translates, hence $\mu(C) \geq k^{-d}$). Note that the translates of $C$ also satisfy these properties of $C$. We call such a measure $\mu( \cdot | C)$ a $T^k$-ergodic component of $\mu$. It follows that $\mu$ is the average of its distinct $T^k$-ergodic components.

\end{mydef}

We may now introduce our measure increment technique, which will be used to reduce our recurrence theorems to ones which assume sufficient equidistribution.

\begin{lemma}[Ergodic measure increment argument]\label{lemma: measure increment} Let $\delta,\epsilon>0$, let $q$ be a positive integer and let $T: \bZ^d \curvearrowright (X,\mu)$ be ergodic. If $B \subset X$ with $\mu(B)>\epsilon$ then there exists a positive integer $k  \leq q^{\log(\epsilon^{-1})/\log(1+\delta)}$ and a $T^k$-ergodic component, say $\nu$, of $\mu$ such that $\nu(B) \geq \mu(B)$ and $B$ is $(q,\delta)$-equidistributed with respect to $T^k:\bZ^d \curvearrowright (X,\nu)$. 

\end{lemma}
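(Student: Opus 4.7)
My plan is to prove the lemma via a density increment argument, iterating over finer and finer $T^{k}$-ergodic components until the required equidistribution is achieved. To set this up, I would put $\nu_0 = \mu$ and $k_0 = 1$, and at each stage $i$ maintain a $T^{k_i}$-ergodic component $\nu_i = \mu(\cdot \mid C_i)$ of $\mu$ (so $C_i$ is $T^{k_i}$-invariant and $T^{k_i}$ acts ergodically on $(C_i, \nu_i)$) satisfying the density-increment bound $\nu_i(B) \geq (1+\delta)^i \mu(B)$. If $B$ is already $(q,\delta)$-equidistributed with respect to $T^{k_i}$ on $(X,\nu_i)$, the procedure halts and outputs $k = k_i$, $\nu = \nu_i$.

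Otherwise, negating Definition~\ref{def: equidistributed}, on a set of positive $\nu_i$-measure we have $\lim_{n\to\infty} |F_n|^{-1}|\{a \in F_n : T^{k_i q a}x \in B\}| > (1+\delta)\nu_i(B)$. The next step is to apply the $\bZ^d$ pointwise ergodic theorem to the action $T^{k_i q}$ on $(X,\nu_i)$: the limit above equals $\bE_{\nu_i}[\mathbf{1}_B \mid \cI_{T^{k_i q}}](x)$ a.s., and this conditional expectation is the constant $\nu_{i+1}(B)$ on any $T^{k_i q}$-ergodic component $\nu_{i+1} = \nu_i(\cdot \mid C_{i+1})$ of $\nu_i$. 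Hence there must exist such a component with $\nu_{i+1}(B) > (1+\delta)\nu_i(B)$. Set $k_{i+1} = q k_i$; I then need to verify that $\nu_{i+1}$ is again a $T^{k_{i+1}}$-ergodic component of the original $\mu$, which follows because $C_{i+1} \subset C_i$ is $T^{k_{i+1}}$-invariant in $X$ (using that $C_i$ itself is $T^{k_i}$-, hence $T^{k_{i+1}}$-invariant), and $\mu(\cdot \mid C_{i+1}) = \nu_i(\cdot \mid C_{i+1})$ is $T^{k_{i+1}}$-ergodic by the choice of $\nu_{i+1}$.

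Finally, since $\nu_i(B) \leq 1$ and $\nu_i(B) \geq (1+\delta)^i\mu(B) > (1+\delta)^i \epsilon$, the iteration must terminate at some step $i \leq \log(\epsilon^{-1})/\log(1+\delta)$, yielding $k = k_i = q^i \leq q^{\log(\epsilon^{-1})/\log(1+\delta)}$ and $\nu(B) \geq (1+\delta)^i \mu(B) \geq \mu(B)$, as required. The only slightly delicate step---and in my view the main obstacle to writing this cleanly---is the compatibility of the ergodic-component structure across iterations, namely that a $T^{k_i q}$-ergodic component of a $T^{k_i}$-ergodic component of $\mu$ is itself a genuine $T^{k_{i+1}}$-ergodic component of $\mu$, as observed in Definition~\ref{def: ergodic components of finite index}. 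Everything else is routine density-increment bookkeeping combined with the pointwise ergodic theorem.
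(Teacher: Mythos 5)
Your proposal is correct and follows essentially the same density-increment iteration as the paper: at each stage the pointwise ergodic theorem applied to $T^{q^{i+1}}$ produces a finer ergodic component on which $B$ has conditional measure larger by a factor $(1+\delta)$, and the process must halt after at most $\log(\epsilon^{-1})/\log(1+\delta)$ steps. The only real difference is that you spell out the nested-component compatibility (that a $T^{qk_i}$-ergodic component of a $T^{k_i}$-ergodic component of $\mu$ is itself a $T^{qk_i}$-ergodic component of $\mu$), a point the paper elides with ``continuing in this fashion''.
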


To study the limits appearing in Definition~\ref{def: equidistributed} we make use of the well known Pointwise Ergodic Theorem.

\begin{prop}[Pointwise Ergodic Theorem] Let $T: \bZ^d \curvearrowright (X,\mu)$ be a measure preserving action. Then for all $f \in L^2(X,\mu)$ there exists $X_f \subset X$ with $\mu(X_f)=1$ such that $$ \lim_{N \to \infty} \frac{1}{|F_N|} \sum_{a \in F_N} f(T^a x) \to P_T f (x) $$ for all  $x \in X_f$.

\end{prop}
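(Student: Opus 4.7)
The plan is to follow the standard two-step strategy: prove a maximal inequality, prove pointwise convergence on a dense subspace of $L^2(X,\mu)$, then combine them by the Banach principle. Throughout, let $\mathcal{I} \subset L^2(X,\mu)$ denote the closed subspace of $T$-invariant functions, so that $P_T$ is the orthogonal projection onto $\mathcal{I}$, and let $A_N f(x) = \frac{1}{|F_N|} \sum_{a \in F_N} f(T^a x)$.

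First I would establish pointwise convergence on a dense subspace. Von Neumann's mean ergodic theorem (applied to the Følner sequence $F_N$) already gives $A_N f \to P_T f$ in $L^2$-norm for all $f \in L^2$. For the pointwise statement on a dense set, consider $\mathcal{D} = \mathcal{I} + \mathrm{span}\{g - g \circ T^{e_i} : g \in L^\infty(X,\mu),\ i=1,\dots,d\}$, where $e_1, \ldots, e_d$ are the standard basis vectors of $\bZ^d$. This subspace is dense in $L^2$ since its orthogonal complement consists of invariant vectors lying in $\mathcal{I}^\perp$, hence is zero. Pointwise convergence is trivial on $\mathcal{I}$, and on a coboundary $f = g - g \circ T^{e_i}$ the sums telescope in the $i$-th coordinate, so $|A_N f(x)| \leq C \|g\|_\infty / N \to 0$ everywhere.

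Next I would establish a weak-type maximal inequality for $Mf(x) := \sup_N |A_N f(x)|$. Because $\bZ^d$ is abelian, the cube average factors as $A_N = A_N^{(1)} \circ A_N^{(2)} \circ \cdots \circ A_N^{(d)}$, where $A_N^{(i)}$ averages along the commuting $\bZ$-action generated by $T^{e_i}$, so $Mf(x) \leq (M_1 \circ \cdots \circ M_d)(|f|)(x)$ with $M_i$ the one-dimensional Hardy-Littlewood-Wiener maximal operator for $T^{e_i}$. Each $M_i$ is bounded on $L^2$ by the classical one-dimensional maximal inequality together with Calderón's transference principle, and iteration gives $\|M f\|_{L^2} \leq C_d \|f\|_{L^2}$. (Alternatively, one may invoke Tempelman's theorem directly, since the cubes $F_N$ are a tempered Følner sequence.)

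Finally the Banach principle combines these ingredients. Given $f \in L^2$ and parameters $\lambda, \eta > 0$, decompose $f = g + h$ with $g \in \mathcal{D}$ and $\|h\|_{L^2} < \eta$. On $\mathcal{D}$ we have $A_N g \to P_T g$ almost everywhere, while Chebyshev together with the maximal inequality gives $\mu\{x : M h(x) > \lambda\} \leq C_d \eta^2 / \lambda^2$. Combined with $\|P_T h\|_{L^2} \leq \eta$, letting $\lambda, \eta$ range over countable sequences tending to zero yields a full-measure set $X_f$ on which $A_N f(x) \to P_T f(x)$. The main obstacle in this outline is the multiparameter maximal inequality: the factorisation trick works smoothly for cubes precisely because of the commuting one-dimensional structure, but for more general Følner shapes one would need a genuine multiparameter Vitali or Tempelman-type covering argument.
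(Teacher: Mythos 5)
The paper does not prove this proposition: it is stated as a classical fact (the Pointwise Ergodic Theorem for $\bZ^d$-actions, due to Wiener and Tempelman) and used as a black box. So there is no paper proof to compare against. Your sketch is a correct and standard account of how one would prove it, and the three ingredients you identify --- the Hopf-type dense subspace $\mathcal{I} + \mathrm{span}\{g - g\circ T^{e_i}\}$, the maximal inequality obtained by factoring the cube average into one-dimensional averages and composing Hardy--Littlewood--Wiener maximal operators, and the Banach principle --- are exactly the right scaffolding. The telescoping estimate on coboundaries, the monotonicity argument showing $Mf \le (M_1\circ\cdots\circ M_d)|f|$, and the closing step with countable $\eta,\lambda$ are all sound. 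The one minor imprecision is the parenthetical: Tempelman's theorem for cubes follows from the doubling condition $|F_N - F_N| \le 2^d|F_N|$, and invoking ``tempered'' (a strictly weaker hypothesis from Lindenstrauss's later work) muddies which theorem you actually mean; for $F_N = [1,N]^d$ either suffices, but it is cleaner to cite Tempelman's doubling condition directly. Since the paper treats the result as known, your full proof is more than the paper requires, but it is a legitimate and essentially complete outline.
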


\begin{proof}[Proof of Lemma~\ref{lemma: measure increment}] If $B$ is $(q,\delta)$ equidistributed, then we are done. Otherwise, it follows from the Pointwise Ergodic Theorem (applied to the action $T^q$ and the indicator function of $B$) that there exists a $T^q$-ergodic component of $\mu$, say $\nu_1$, such that $\nu_1(B) \geq (1+\delta)\mu(B)$. Continuing in this fashion, we may produce a maximal sequence of ergodic components $\nu_1,\nu_2 \ldots, \nu_J $ of $T^q,T^{q^2}, \ldots T^{q^J}$, respectively, such that $\nu_{j+1}(B) \geq (1+\delta) \nu_{j} (B)$. Clearly we must have $\epsilon(1+\delta)^J \leq 1$ and so this finishes the proof with $k=q^J$. \end{proof}

We now turn to demonstrating the key spectral properties of a $(q,\delta)$-equidistribution set.

\begin{mydef}[Eigenspaces] \label{def: eigen} If $T: \bZ^d \curvearrowright (X,\mu)$ is a measure preserving action and $\chi \in \widehat{\bZ^d}$ is a character on $\bZ^d$, then we say that $f \in L^2(X,\mu)$ is a $\chi$-eigenfunction if $$T^a f= \chi(a)f \text{ for all } a \in \bZ^d.$$ We let $\text{Eig}_T(\chi)$ denote the space of $\chi$-eigenfunctions and for $R \subset  \widehat{\bZ^d}$ we let $$\text{Eig}_T(R)=\overline{\text{Span}\{ f \text{ } | \text{ } f \in \text{Eig}(\chi) \text{ for some } \chi \in R \}}^{L^2(X,\mu)}.$$
In particular, we will be intersted in the sets $R_q = \{ \chi \in \widehat{\bZ^d}  \text{ } |\text{ } \chi^q=1 \}$ and $R^*_q= R_q \setminus \{ 1 \}$, where $q \in \bZ$.  Note that the spaces $\text{Eig}_T(\chi)$ are orthogonal to each other and hence $\text{Eig}_T(R)$ has an orthonormal basis consiting of $\chi$-eigenfunctions, for $\chi \in R$. Note also that Ergodicity implies that each $\text{Eig}_T(\chi)$ is at most one dimensional.\end{mydef}

\begin{prop} \label{prop: equidist implies small proj}  Let $T: \bZ^d \curvearrowright (X,\mu)$ be an ergodic measure preserving action and suppose that $B \subset X$ is $(q,\delta)$-equidistributed. Let $h \in L^2(X,\mu)$ be the orthogonal projection of $\mathds{1}_B$ onto $\text{Eig}_T(R^*_q)$. Then $$P_{T^q} \mathds{1}_B=\mu(B) + h$$ and $$\| h \|_2 \leq \sqrt{(2\delta + \delta^2)}\mu(B).  $$

\end{prop}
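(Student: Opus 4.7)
The plan is to decompose $P_{T^q}$ spectrally, then upgrade the $(q,\delta)$-equidistribution hypothesis to a pointwise bound and read off the $L^2$ estimate via Pythagoras.

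First I would identify $P_{T^q}$ with orthogonal projection onto the subspace $V \subset L^2(X,\mu)$ of $T^q$-invariant functions. On $V$, the unitary $\bZ^d$-action factors through the finite quotient $\bZ^d/q\bZ^d$, whose characters are precisely $R_q$, so the standard isotypic decomposition for this finite abelian group gives $V = \bigoplus_{\chi \in R_q} \text{Eig}_T(\chi) = \text{Eig}_T(\{1\}) \oplus \text{Eig}_T(R^*_q)$. The orthogonal projection of $\mathds{1}_B$ onto the constants is $\int \mathds{1}_B\, d\mu = \mu(B)$, and by the very definition of $h$ its projection onto $\text{Eig}_T(R^*_q)$ is $h$. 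Hence $P_{T^q}\mathds{1}_B = \mu(B) + h$ with the two summands orthogonal, proving the first assertion.

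Next I would apply the Pointwise Ergodic Theorem stated above to the $T^q$-action (note that it is stated for arbitrary, not necessarily ergodic, measure preserving actions, so $T^q$-ergodicity is not needed) to identify $P_{T^q}\mathds{1}_B(x)$ with the almost-sure limit of the averages $|F_n|^{-1}|\{a \in F_n : T^{qa}x \in B\}|$. The $(q,\delta)$-equidistribution hypothesis then immediately gives $0 \leq P_{T^q}\mathds{1}_B(x) \leq (1+\delta)\mu(B)$ for almost every $x$, so $\|P_{T^q}\mathds{1}_B\|_\infty \leq (1+\delta)\mu(B)$.

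Finally, since $\mu(B)$ (as a constant function) is orthogonal to $h$, Pythagoras gives $\|P_{T^q}\mathds{1}_B\|_2^2 = \mu(B)^2 + \|h\|_2^2$. Because $\mu$ is a probability measure, $\|P_{T^q}\mathds{1}_B\|_2 \leq \|P_{T^q}\mathds{1}_B\|_\infty \leq (1+\delta)\mu(B)$, and rearranging yields $\|h\|_2^2 \leq \bigl((1+\delta)^2 - 1\bigr)\mu(B)^2 = (2\delta + \delta^2)\mu(B)^2$, as desired. The only step requiring any thought is the spectral identification of $V$ with $\text{Eig}_T(R_q)$, which amounts to Pontryagin duality for the finite group $\bZ^d/q\bZ^d$; the rest is bookkeeping.
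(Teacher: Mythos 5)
Your proof is correct and follows essentially the same route as the paper: identify the $T^q$-invariant subspace with $\text{Eig}_T(R_q)$, use ergodicity of $T$ to split off the constant $\mu(B)$, apply the pointwise ergodic theorem for the $T^q$-action together with $(q,\delta)$-equidistribution to obtain the $L^\infty$ bound $(1+\delta)\mu(B)$ on $P_{T^q}\mathds{1}_B$, and finish by Pythagoras. Your observation that the pointwise ergodic theorem needs no ergodicity hypothesis for the (generally non-ergodic) $T^q$-action is a useful clarification that the paper leaves implicit.
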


\begin{proof} Note that\footnote{This follows from the fact that all finite dimensional representations of a finite abelian group can be decomposed into one dimensional representations.} $\text{Eig}_T(R_q)=L^2(X,\mu)^{T^q}$. This, together with the ergodicity of $T$, shows that \\$h = P_{T^q}\mathds{1}_B - \mu(B)$. Now the pointwise ergodic theorem, applied to the action $T^q$, combined with the $(q,\delta)$-equidistribution of $B$ immediately gives that $$\| h \|^2_2 = \|P_{T^q} \mathds{1}_B \|^2 - \| \mu(B) \|^2_2 \leq (1+\delta)^2\mu(B)^2 - \mu(B)^2= (2\delta + \delta^2) \mu(B)^2.$$ \end{proof} 

\section{A quantitative polynomial mean ergodic theorem and proof of Theorem~\ref{thm: uniform polynomial recurrence}}

\begin{thm} \label{polynomial mean ergodic thm}

Let $D,r , Q$ be positive integers and $\epsilon > 0$. There exists a positive integer $q = q(D, r, Q, \epsilon)$ such that the following is true: Let $P:\bZ \to \bZ^r$ be a polynomial $P(n) = (P_1(n), \ldots, P_r(n))$ with degree at most $D$ (that is, $P_i(n) \in \bZ[n]$ with $deg(P_i) \leq D$) such that $P(n)$ has multiplicative complexity $Q$ and $P$ is hyperplane-fleeing. Let $T:\bZ^r \curvearrowright (X,\mu)$ be an ergodic measure preserving system and suppose that $B \subset X$ is $(q,\delta)$-equidistributed. Then \begin{align}\label{polynomial mean estimate} \limsup_{N \to \infty}\left \| \mu(B) - \frac{1}{N} \sum_{n=1}^N T^{P(n)} \mathds{1}_B\right \|_2 \leq \sqrt{3\delta} + \epsilon. \end{align}

\end{thm}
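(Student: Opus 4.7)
The plan is to use the spectral decomposition $\mathds{1}_B = \mu(B) + h + g$, where $h$ is the orthogonal projection of $\mathds{1}_B$ onto $\text{Eig}_T(R^*_q)$ and $g$ lies in the orthogonal complement of $\text{Eig}_T(R_q)$. The threshold $q = q(D,r,Q,\epsilon)$ will be chosen at the end as a factorial-like integer (say $q = q_0!$), so that every character $\chi \notin R_q$ of finite order automatically has order $> q_0$. Since each $T^{P(n)}$ is unitary on $L^2(X,\mu)$, this splits the Ces\`aro average $\frac{1}{N}\sum_{n=1}^N T^{P(n)}\mathds{1}_B$ into three pieces whose constant-function component is exactly $\mu(B)$, reducing the proof to bounding the $h$ and $g$ contributions separately.

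The $h$-piece is immediate from Proposition~\ref{prop: equidist implies small proj}: the $(q,\delta)$-equidistribution hypothesis gives $\|h\|_2 \leq \sqrt{(2\delta+\delta^2)}\mu(B) \leq \sqrt{3\delta}$, and unitarity yields $\|\frac{1}{N}\sum T^{P(n)}h\|_2 \leq \|h\|_2$. For the $g$-piece, the spectral theorem for the $\bZ^r$-action gives
\begin{equation*}
\Bigl\|\frac{1}{N}\sum_{n=1}^N T^{P(n)}g\Bigr\|_2^2 = \int_{\widehat{\bZ^r}} \Bigl|\frac{1}{N}\sum_{n=1}^N \chi(P(n))\Bigr|^2 d\sigma_g(\chi),
\end{equation*}
where the spectral measure $\sigma_g$ places no mass on $R_q$ by construction.

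The main task is then to bound $\limsup_N |\frac{1}{N}\sum_n \chi(P(n))| \leq \epsilon$ pointwise for $\sigma_g$-a.e.\ $\chi$. Writing $\chi(x) = e(\theta \cdot x)$ with $\theta \in \bT^r$, there are two cases. If $\chi$ has finite order $q'$, then by the choice of $q$ we have $q' > q_0$; the sequence $\chi(P(n))$ is periodic of period dividing $q'$, so the Ces\`aro average converges to $\frac{1}{q'}\sum_{n=1}^{q'}\chi(P(n))$, which Proposition~\ref{prop: multivariate Hua bound} controls by $\epsilon_{r,D,Q}(q') \leq \epsilon$ (choosing $q_0$ so that $\sup_{q' > q_0} \epsilon_{r,D,Q}(q') \leq \epsilon$, possible since $\epsilon_{r,D,Q}(q') \to 0$). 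If $\chi$ has infinite order, then hyperplane-fleeing is essential: the coefficient vectors $c_{\cdot,1}, \ldots, c_{\cdot,D}$ of $P$ span $\bR^r$, so were $\theta \cdot c_{\cdot,j}$ rational for every $j \geq 1$ then $\theta$ itself would be rational modulo $\bZ^r$ (since the $c_{\cdot,j}$ contain a basis given by an integer invertible matrix), contradicting infinite order; hence the real polynomial $\theta \cdot P(n)$ has an irrational coefficient in some degree $\geq 1$, and Weyl's equidistribution theorem forces $\frac{1}{N}\sum_n e(\theta\cdot P(n)) \to 0$.

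Finally, bounded convergence with dominating function $1$ converts this pointwise bound into $\limsup_N \|\frac{1}{N}\sum T^{P(n)}g\|_2^2 \leq \epsilon^2 \|g\|_2^2 \leq \epsilon^2$, and the triangle inequality on the three pieces yields the claimed bound $\sqrt{3\delta} + \epsilon$. The hardest step to execute carefully is the infinite-order case of the spectral split: the hyperplane-fleeing hypothesis must be invoked in the precise form above to promote Weyl's theorem to an unconditional pointwise decay, which is where the geometric condition on $P$ enters in an essential way.
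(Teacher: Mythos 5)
Your proposal is correct and follows essentially the same route as the paper's proof: decompose $\mathds{1}_B$ into the constant $\mu(B)$, the projection onto $\text{Eig}_T(R_q^*)$ (controlled by the $(q,\delta)$-equidistribution hypothesis via Proposition~\ref{prop: equidist implies small proj}), and a remainder handled through the spectral theorem with Weyl equidistribution on infinite-order characters (using hyperplane-fleeing) and the Hua bound of Proposition~\ref{prop: multivariate Hua bound} on high-order rational characters. The only cosmetic difference is that the paper first discards the component orthogonal to the rational Kronecker factor via a standalone lemma and then treats the rational part by an explicit eigenfunction expansion, whereas you merge the irrational and high-order-rational cases into a single spectral-measure integral for $g$; the decomposition, choice of $q$ (lcm versus factorial, both valid), and key estimates are otherwise identical.
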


\begin{remark}
We could find an explicit  dependence of $q$ on $D, r, Q, \epsilon$ in Theorem~\ref{polynomial mean ergodic thm} but decided to refrain from this task since we are not looking for  optimal bounds on $k_0$ in Theorem~\ref{thm: uniform polynomial recurrence}. 
\end{remark}
To prove this theorem, we first show the following.

\begin{lemma} \label{lemma: polynomial average zero on irrational spectrum}  Suppose that $T: \bZ^d \curvearrowright (X,\mu)$ is a measure preserving system. Let $f \in L^2(X,\mu)$ be orthogonal to the rational Kronecker factor of $(X,\mu,T)$. Then for all polynomials $P_1(n), \ldots P_d(n) \in \bZ[n]$ such that no non-trivial $\bR$-linear combination of them is constant we have that $$\lim_{N \to \infty}\left \| \frac{1}{N} \sum_{n=1}^N T^{P(n)} f \right \|_2 =0,$$ where $P(n)=(P_1(n), \ldots, P_d(n)).$ 

\end{lemma}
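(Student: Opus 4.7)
The plan is to use the spectral theorem for the unitary action of $T$ on $L^2(X,\mu)$. This produces a positive Borel measure $\sigma_f$ on $\widehat{\bZ^d} = \bT^d$ satisfying $\langle T^n f, f \rangle = \int_{\bT^d} e(n \cdot \theta) \, d\sigma_f(\theta)$ for all $n \in \bZ^d$. Expanding the squared $L^2$-norm yields
$$\left\| \frac{1}{N} \sum_{n=1}^N T^{P(n)} f \right\|_2^2 = \int_{\bT^d} \left| \frac{1}{N} \sum_{n=1}^N e(P(n) \cdot \theta) \right|^2 d\sigma_f(\theta),$$
so it suffices to show that the right-hand side tends to zero.

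For each fixed $\theta \in \bT^d$, write $P(n) \cdot \theta = \sum_{j=0}^D c_j(\theta) n^j$ with $c_j(\theta) = \sum_{i=1}^d a_{i,j} \theta_i$, where $a_{i,j}$ are the coefficients of $P_i$. By Weyl's equidistribution theorem for polynomials, the inner average tends to zero unless every $c_j(\theta)$ with $j \geq 1$ lies in $\bQ/\bZ$. Let $\Lambda \subset \bT^d$ denote this exceptional set. Since the inner average is uniformly bounded by $1$, the bounded convergence theorem reduces matters to proving $\sigma_f(\Lambda) = 0$.

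The central step is to verify that $\Lambda \subset \bQ^d/\bZ^d$. The hypothesis that no non-trivial $\bR$-linear combination of $P_1, \ldots, P_d$ is constant is precisely the statement that the polynomials $P_i - P_i(0)$ are $\bR$-linearly independent, equivalently that the $D \times d$ integer matrix $A = (a_{i,j})_{j \geq 1, \, i \leq d}$ has rank $d$. Consequently the linear map $\ell : \bR^d \to \bR^D$, $\ell(\theta) = (c_1(\theta), \ldots, c_D(\theta))$, is injective with a $\bQ$-linear left inverse. If $\theta \in \Lambda$ lifts to $\tilde\theta \in \bR^d$, then $\ell(\tilde\theta) \in \bQ^D + \bZ^D = \bQ^D$, and applying the left inverse forces $\tilde\theta \in \bQ^d$; hence $\theta \in \bQ^d/\bZ^d$.

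Finally, since the rational Kronecker factor is by definition the closed span of eigenfunctions whose eigencharacter lies in $\bQ^d/\bZ^d$, orthogonality of $f$ to this factor means $\sigma_f$ has no atom at any rational character. As $\bQ^d/\bZ^d$ is countable, the continuous part of $\sigma_f$ also assigns it zero mass, so $\sigma_f(\Lambda) \leq \sigma_f(\bQ^d/\bZ^d) = 0$ and dominated convergence finishes the argument. The main obstacle is the linear-algebra step establishing $\Lambda \subset \bQ^d/\bZ^d$, as this is the only place where the non-degeneracy hypothesis on the polynomial tuple $P$ enters.
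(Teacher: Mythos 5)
Your proposal is correct and follows essentially the same route as the paper: spectral theorem, reduction to showing the spectral measure vanishes on the exceptional set of Weyl's theorem, the observation that orthogonality to the rational Kronecker factor forces the spectral measure to vanish on the countable set $\bQ^d/\bZ^d$, and dominated convergence. The only cosmetic difference is in the key linear-algebra step: the paper argues via linear dependence of the polynomials $q(n), P_1(n)-P_1(0),\ldots,P_d(n)-P_d(0)$ over $\bR$ (hence over $\bQ$) and uniqueness of coefficients, while you phrase the same fact as full column rank of the integer coefficient matrix $A$ and the existence of a rational left inverse — these are equivalent formulations of the same idea.
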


\begin{proof} By the spectral theorem, there exists a positive Borel measure $\sigma$ on $\bT^d$ such that $$\langle T^v f, f \rangle = \int_{\bT^d} e(\langle v, \theta \rangle ) d\sigma(\theta) \quad \text{for all }v \in \bZ^d. $$ In particular, for each character $\chi$, written as $\chi(v) = e(\langle v, \theta \rangle)$ for some $\theta \in \bT^d$, we have that $$\langle P_{\text{Eig}_T}(\chi) f, f \rangle = \sigma(\{ \theta \})$$ where $P_{\text{Eig}_T}(\chi) f$ denotes the orthogonal projection to $f$ onto the $\chi$-eigenfunctions (this follows from applying the Mean Ergodic Theorem to the unitary action $\chi(v)^{-1}T^v$ since the $\chi(v)^{-1}T^v$ invariant functions are precisely the $\chi$-eigenfunctions). Hence since $f \in L^2_{\textbf{Rat}}(X,\mu,T)^{\perp}$ we have that $\sigma ( \bQ^d/\bZ^d)=0$ and hence $$\left \|\frac{1}{N} \sum_{n=1}^N T^{P(n)} f \right \|^2_2 =\int_{\Omega} \left| \frac{1}{N} \sum_{n=1}^N e( \langle  P(n), \theta\rangle ) \right|^2 d\sigma(\theta)  $$ where $\Omega=\bT^d \setminus (\bQ^d /\bZ^d).$ We now claim that if $\theta \in \Omega$ then the polynomial $\langle P(n), \theta \rangle \notin \bR + \bQ[n]$. To see this, suppose for contradiction that it is not, thus $$q(n) = \sum \theta_i (P_i(n) -P_i(0)) \in \bQ[n]$$ and thus the polynomials $q(n), P_1(n) - P_1(0), \ldots, P_d(n) - P_d(0)$ are linearly dependent over the real numbers and hence over the rationals (as they are all rational polynomials) but $P_1(n) - P_1(0), \ldots, P_d(n) - P_d(0)$ are linearly independent by assumption, so we have a linear combination $$q(n) = \sum_i \theta_i' (P_i(n) - P_i(0))$$ with $\theta_i' \in \bQ$. But by linear independence of $P_1(n) - P_1(0), \ldots, P_d(n) - P_d(0)$ we must have $\theta_i = \theta_i' \in \bQ$.

This means we can apply Weyl's polynomial equidistribution theorem to get that $$ \lim_{N\to \infty} \frac{1}{N} \sum_{n=1}^N e( \langle  P(n), \theta\rangle ) =0 \quad \text{for all } \theta \in \Omega.$$ The dominated convergence theorem now completes the proof. \end{proof}

\begin{proof}[Proof of Theorem~\ref{polynomial mean ergodic thm}] Let $q_0$ be such that $\epsilon_{r,D, Q}(q) < \epsilon$ for all $q \geq q_0$ for an $\epsilon_{r,D,Q}$ as in Proposition~\ref{prop: multivariate Hua bound} and set $q = \text{lcm}\{1, \ldots, q_0\}$. By Lemma~\ref{lemma: polynomial average zero on irrational spectrum}, the left hand side of (\ref{polynomial mean estimate}) remains unchanged if we replace $\mathds{1}_B$ with $P_{\text{Rat}} \mathds{1}_B$. We can write $$P_{\text{Rat}}\mathds{1}_B = \mu(B) + \sum_{\chi \in R^{\ast}_q} c_{\chi} \rho_{\chi} + \sum_{\chi \in \textbf{Rat} \setminus R_q} c_{\chi} \rho_{\chi}  $$ where $\rho_{\chi}$ is a $\chi$-eigenfunction of norm $1$ and $c_{\chi} \in \bC$. From Proposition~\ref{prop: equidist implies small proj} we get that \begin{align}\label{Rq estimate}\left\| \frac{1}{N} \sum_{n =1}^N T^n \sum_{\chi \in R^{\ast}_q} c_{\chi} \rho_{\chi} \right\|^2_2 \leq \left\| \sum_{\chi \in R^{\ast}_q} c_{\chi} \rho_{\chi} \right\|^2_2 \leq (2\delta+\delta^2) \mu(B)^2 \leq 3\delta.\end{align} Now if $\chi \in \textbf{Rat} \setminus R_q$ then the cardinality of the image of $\chi$ is $q' \geq q_0$ and the map $n \mapsto \chi(P(n))$ is $q'$ periodic, hence by Proposition~\ref{prop: multivariate Hua bound} we get that $$ \lim_{N \to \infty} \left| \frac{1}{N} \sum_{n=1}^N \chi(P(n)) \right| = \left| \frac{1}{q'} \sum_{n=1}^{q'} \chi(P(n))  \right| \leq \epsilon. $$ This implies that \begin{align*} \limsup_{N \to \infty} \left\| \frac{1}{N} \sum_{n = 1}^N T^{P(n)} \sum_{\chi \in \textbf{Rat} \setminus R_q} c_{\chi} \rho_{\chi}\right \|^2_2 &=  \limsup_{N \to \infty} \left\| \sum_{\chi \in \textbf{Rat} \setminus R_q}\left(\frac{1}{N} \sum_{n=1}^N \chi(P(n)) \right) c_{\chi} \rho_{\chi}\right \|^2_2 \\ &\leq \epsilon^2  \sum_{\chi \in \textbf{Rat} \setminus R_q} c^2_{\chi} \\&\leq \epsilon^2 \mu(B) \leq \epsilon^2. \end{align*} Finally, combining this estimate with (\ref{Rq estimate}) and using the triangle inequality gives the desired estimate (\ref{polynomial mean estimate}). \end{proof}

The Cauchy-Schwartz inequality and Theorem~\ref{polynomial mean ergodic thm} immediately give the following.

\begin{thm}\label{thm: low complexity and equidistribution gives recurrence} Let $D,r , Q$ be positive integers and $\epsilon > 0$. There exists a positive integer $q = q(D, r, Q, \epsilon)$ such that the following is true: Let $P:\bZ \to \bZ^r$ be a polynomial $(P(n) = (P_1(n), \ldots, P_r(n))$ with degree at most $D$ (that is, $P_i(n) \in \bZ[n]$ with $deg(P_i) \leq D$) such that $P(n)$ has multiplicative complexity $Q$ and $P$ is hyperplane-fleeing. Let $T:\bZ^r \curvearrowright (X,\mu)$ be an ergodic measure preserving system and suppose that $B \subset X$ is $(q,\delta)$-equidistributed. Then there exist arbitrarily large $n \in \bZ$ such that $$\mu(T^{P(n)}B \cap B) > \mu(B)^2 - \epsilon - \sqrt{3\delta}.$$

\end{thm}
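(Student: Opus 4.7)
The strategy, as flagged by the author's parenthetical comment, is to convert the $L^2$ mean ergodic estimate from Theorem~\ref{polynomial mean ergodic thm} into a pointwise lower bound on the intersection measures $\mu(T^{P(n)}B \cap B)$ via Cauchy--Schwarz, and then extract ``arbitrarily large $n$'' by a simple pigeonhole argument on the Ces\`aro average. No new ideas are required beyond careful bookkeeping of constants.

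Concretely, I would choose $q = q(D,r,Q,\epsilon/2)$ as provided by Theorem~\ref{polynomial mean ergodic thm} with parameter $\epsilon/2$ in place of $\epsilon$. Using the identity $\mu(T^{P(n)}B \cap B) = \langle T^{P(n)} \mathds{1}_B, \mathds{1}_B \rangle$ and the fact that $\langle \mu(B), \mathds{1}_B \rangle = \mu(B)^2$, I rewrite
\[
\frac{1}{N} \sum_{n=1}^N \mu(T^{P(n)}B \cap B) - \mu(B)^2 \;=\; \left\langle \frac{1}{N} \sum_{n=1}^N T^{P(n)} \mathds{1}_B - \mu(B),\; \mathds{1}_B \right\rangle.
\]
Applying Cauchy--Schwarz with $\|\mathds{1}_B\|_2 = \sqrt{\mu(B)} \leq 1$ and invoking Theorem~\ref{polynomial mean ergodic thm}, I obtain
\[
\liminf_{N \to \infty} \frac{1}{N} \sum_{n=1}^N \mu(T^{P(n)}B \cap B) \;\geq\; \mu(B)^2 - \sqrt{3\delta} - \tfrac{\epsilon}{2}.
\]

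To finish, I argue by contradiction: if only finitely many $n$ satisfied the strict inequality $\mu(T^{P(n)}B \cap B) > \mu(B)^2 - \sqrt{3\delta} - \epsilon$, then for all sufficiently large $N$ the Ces\`aro average would be bounded above by a quantity tending to $\mu(B)^2 - \sqrt{3\delta} - \epsilon$, contradicting the liminf bound just established (thanks to the $\epsilon/2$ slack built in). Therefore arbitrarily large $n$ satisfy the desired bound. The whole argument is essentially a one-line corollary: the ``hard'' analytic content is already packaged into Theorem~\ref{polynomial mean ergodic thm}, so there is no genuine obstacle at this step.
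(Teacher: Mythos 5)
Your proof is correct and follows the same route the paper intends (the paper omits the argument entirely, saying only that Cauchy--Schwarz and Theorem~\ref{polynomial mean ergodic thm} ``immediately give'' it): pair the $L^2$ estimate with $\mathds{1}_B$ via Cauchy--Schwarz to bound the Ces\`aro average of $\mu(T^{P(n)}B\cap B)$ from below, then extract arbitrarily large good $n$ by pigeonhole. Your insertion of the $\epsilon/2$ slack when invoking Theorem~\ref{polynomial mean ergodic thm} is a sensible and correct precaution, since without it the ``finitely many exceptions'' contradiction would only yield $\liminf=\limsup=\mu(B)^2-\sqrt{3\delta}-\epsilon$ rather than a genuine contradiction with the strict inequality in the conclusion.
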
 

\begin{proof}[Proof of Theorem~\ref{thm: uniform polynomial recurrence}] Take $q = q(D,r,Q,\frac{\epsilon^2}{2})$ as in Theorem~\ref{thm: low complexity and equidistribution gives recurrence}. Now apply the measure increment argument (Lemma~\ref{lemma: measure increment}) to pass to a $T^k$-ergodic component $\nu$ of $\mu$ such that $B$  is $(q, \delta)$-equidistributed where $\delta = \frac{1}{12} \epsilon^4  $ and $k \leq k_0(q, \epsilon) \leq q^{\log(\epsilon^{-1})/\log(1+\delta)}$. So we may apply Theorem~\ref{thm: low complexity and equidistribution gives recurrence} to the $T^k: \bZ^r \curvearrowright (X,\nu)$ and get the desired conclusion (note that $\nu(B') > 0$ implies $\mu(B')>0$). \end{proof}

\section{Constructing polynomials from unipotent elements (proof of Theorem~\ref{thm: uniformly virtually recurrent orbits for unipotents})}
\label{section: from polynomials to unipotent actions}

We now show how to prove Theorem~\ref{thm: uniformly virtually recurrent orbits for unipotents} on uniform recurrence of unipotent actions from Theorem~\ref{thm: uniform polynomial recurrence} on uniform polynomial recurrence. This will amount to constructing appropriate polynomials from sufficiently nice sets of unipotent elements as given in the hypothesis of Theorem~\ref{thm: uniformly virtually recurrent orbits for unipotents}.

\begin{prop} Let $N$, $r$ and $Q$ be positive integers and let $D = D(N,r) = (r+1)^{N+1} - r - 1$. Suppose $u_1, \ldots, u_N \in \operatorname{SL}_r(\bZ)$ are unipotent elements, i.e., $u_j - I$ are nilpotent matrices,  and $v \in \bZ^r$ is such that $$\mathcal{S} = \{u_1^{n_1} \cdots u_N^{n_N} ~|~ n_1, \ldots, n_N \in \bZ \}$$ satisfies the properties that $\mathcal{S}v$ is hyperplane-fleeing and $Q$-coset fleeing in $\bZ^r$. Then there exists a polynomial $P:\bZ \to \bZ^r$ with $P(n) = (P_1(n), \ldots, P_r(n))$ where $P_i(n) \in \bZ[n]$ such that $P(n)$ has degree at most $D$, multiplicative complexity $Q$ and the image $$\{ P(n) ~|~ n \in \bZ\}$$ is contained in $\mathcal{S}v$ and is hyperplane fleeing. \end{prop}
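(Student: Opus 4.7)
The plan is to construct $P$ as the composition of the natural polynomial parametrization $\Phi : \bZ^N \to \bZ^r$ of $\mathcal{S}v$ with a polynomial curve $\bZ \to \bZ^N$ chosen so that distinct multivariate monomials in $\Phi$ are sent to distinct univariate monomials. Since each $u_i \in \operatorname{SL}_r(\bZ)$ is unipotent, $(u_i - I)^r = 0$, so $u_i^{n_i}v = \sum_{k=0}^{r-1} \binom{n_i}{k}(u_i - I)^k v$ is a polynomial in $n_i$ of degree at most $r-1$, and hence $\Phi(\vec n) := u_1^{n_1} \cdots u_N^{n_N} v$ is integer-valued of multidegree at most $r-1$ in each coordinate, with image exactly $\mathcal{S}v$. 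I will write
\[ \Phi(\vec n) = \sum_{\vec k \in \{0,\ldots,r-1\}^N} w_{\vec k}\, \prod_i \binom{n_i}{k_i},\qquad w_{\vec k} := (u_1-I)^{k_1} \cdots (u_N-I)^{k_N} v \in \bZ^r. \]

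I will then set $C := (r-1)!$ and $f_i(n) := C\, n^{(r+1)^{i-1}}$, defining $P(n) := \Phi(f_1(n), \ldots, f_N(n))$. Since $k! \mid (r-1)!$ whenever $k \le r-1$, the classical identity $\binom{k!\,m}{k} \in \bZ[m]$ shows that each $\binom{f_i(n)}{k}$ has integer coefficients in $n$, so $P \in \bZ^r[n]$ and $P(\bZ) \subset \mathcal{S}v$ by construction. The key arithmetic feature of the exponents $(r+1)^{i-1}$ is that since $k_i \in \{0,\ldots,r-1\} \subsetneq \{0,\ldots,r\}$, the map $\vec k \mapsto j(\vec k) := \sum_i k_i (r+1)^{i-1}$ is the unique base-$(r+1)$ digit expansion and hence is injective; consequently the polynomials $g_{\vec k}(n) := \prod_i \binom{f_i(n)}{k_i}$ have pairwise distinct leading degrees and are $\bQ$-linearly independent. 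This immediately gives the degree bound $\deg P \le (r-1) \sum_{i=1}^N (r+1)^{i-1} \le (r+1)^{N+1} - r - 1 = D$.

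To verify hyperplane fleeing, I will argue by contradiction: if $P(\bZ) \subset \{x : L(x) = c\}$ for some nonzero linear form $L$, then $L(P(n)) = c$ as a polynomial identity in $n$. Expanding $L(P(n)) = L(v) + \sum_{\vec k \ne 0} L(w_{\vec k})\, g_{\vec k}(n)$ and using the $\bQ$-linear independence of the $g_{\vec k}$, one forces $L(w_{\vec k}) = 0$ for all $\vec k \ne 0$ and $L(v) = c$. Then $L(\Phi(\vec n)) = L(v) = c$ for every $\vec n \in \bZ^N$, so $\mathcal{S}v \subset \{L = c\}$, contradicting the hyperplane-fleeing hypothesis on $\mathcal{S}v$.

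The hardest step will be bounding the multiplicative complexity of $P$ by $Q$. Here I will suppose for contradiction that $q' := \gcd(b_1, \ldots, b_D, q) > Q$ for some $\vec a \in \bZ^r$, $q \in \bZ$ with $\gcd(\vec a, q) = 1$, translating the polynomial identity $(P(n) - P(0)) \cdot \vec a \equiv 0 \pmod{q'}$ into
\[ \sum_{\vec k \neq 0} (\vec a \cdot w_{\vec k})\, g_{\vec k}(n) \equiv 0 \pmod{q'}. \]
Extracting $\vec a \cdot w_{\vec k} \equiv 0 \pmod{q'}$ for every $\vec k \neq 0$ would yield $\vec a \cdot (\Phi(\vec n) - v) \equiv 0 \pmod{q'}$ identically on $\bZ^N$, which would place $\mathcal{S}v$ inside a single coset of the subgroup $W := \{x : \vec a \cdot x \equiv 0 \pmod{q'}\}$ of index exactly $q' > Q$ in $\bZ^r$, contradicting $Q$-coset fleeing. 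I will perform this extraction by descending induction on $j(\vec k)$, comparing the coefficient of $n^{j(\vec k)}$ on both sides to obtain a triangular linear system. The principal obstacle is that the resulting diagonal entries $\prod_i C^{k_i}/k_i!$ are generally not coprime to an arbitrary $q' > Q$, so naive triangular inversion fails; overcoming this is the technical heart of the argument and the reason for the looser degree bound $D = (r+1)^{N+1} - r - 1$, since the extra room afforded by base $(r+1)$ over base $r$ creates intermediate (non-top-degree) coefficient equations that serve as interpolation nodes to absorb the constant factors $\prod_i C^{k_i}/k_i!$ and complete the induction modulo any $q' > Q$.
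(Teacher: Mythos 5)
Your parametrization, the degree bound, the containment $P(\bZ)\subset\mathcal{S}v$, and the hyperplane-fleeing argument are all correct, and your insertion of the factor $C=(r-1)!$ to force $\binom{f_i(n)}{k}\in\bZ[n]$ is a legitimate concern (the paper is in fact cavalier about integrality of the coefficients of $u_i^{n_i}$). However, the multiplicative complexity step has a genuine gap, and it is not merely an unfinished computation: the factor $C$ you introduced actively destroys the information you need. Concretely, the vector $\vec{a}\cdot w_{\vec{k}}$ for $\vec{k}=e_1$ enters the polynomial $(P(n)-P(0))\cdot\vec{a}$ only through the single term $(\vec{a}\cdot w_{e_1})\binom{Cn}{1}=(\vec{a}\cdot w_{e_1})\,Cn$, i.e.\ only in $b_1$ and only with coefficient $C$. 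Hence for any prime $p\mid C$ (which exists once $r\geq 3$), the entire system of congruences $q'\mid b_j$ carries \emph{no} information about $\vec{a}\cdot w_{e_1}\bmod p$, so when $p\mid\gcd(C,q')$ the extraction $q'\mid\vec{a}\cdot w_{\vec{k}}$ is simply impossible, and you cannot conclude that all of $\mathcal{S}v$ (as opposed to the thin subset $P(\bZ)$) lies in one coset of $\{x:\vec{a}\cdot x\equiv 0\ (\mathrm{mod}\ q')\}$. Your proposed remedy --- that the degrees $j$ not of the form $j(\vec{k})$ supply extra equations that ``absorb'' the diagonal factors $\prod_i C^{k_i}/k_i!$ --- has no force: those are further $\bZ$-linear relations in which $\vec{a}\cdot w_{e_1}$ does not appear at all, so they cannot recover it modulo $p$.

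The paper's proof never inverts a linear system, which is why it avoids this trap. It defines $q'$ as the gcd of $q$ with the coefficients of the \emph{multivariate} polynomial $F(\vec{n})=(S(\vec{n})-S(0))\cdot\vec{a}$; since $F$ has zero constant term and all coefficients divisible by $q'$, its \emph{values} over all of $\bZ^N$ --- which sweep out $\vec{a}\cdot(\mathcal{S}v-v)$ in its entirety, not just along a curve --- lie in $q'\bZ$, so $\mathcal{S}v$ sits in a single coset of a subgroup of index $q'$ and $Q$-coset fleeing gives $q'\leq Q$ directly. Only then does it substitute $n_j\mapsto n^{(r+1)^j}$, with no scaling factor, so that the substitution is injective on the occurring monomials and the univariate polynomial has exactly the same multiset of nonconstant coefficients, hence the same gcd with $q$. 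If you wish to keep your binomial-basis setup, you must run the coset-fleeing argument at the level of the multivariate values before specializing to a curve, and handle integrality separately rather than by rescaling the parameters.
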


\begin{proof} 

As $u_i$ is unipotent, we have that the entries of the matrix $u_i^{n_i}$ are polynomials of degree at most $r$ in $n_i$. Thus $$S(n_1, \ldots, n_N) = u_1^{n_1} \cdots u_N^{n_N}v = (S_1(n_1, \ldots n_N), \ldots, S_r(n_1, \ldots, n_N))$$ with $S_i(n_1, \ldots, n_N) \in \bZ[n_1, \ldots, n_N]$. Suppose that $\vec{a} = (a_1, \ldots a_r) \in \bZ^r$ and $q \in \bZ$ are such that $gcd(a_1, \ldots, a_r, q) = 1$. Let $b_0 = S(0, \ldots, 0)$ be the constant term of $S$. Now consider the polynomial $$F(n_1, \ldots, n_N) = (S(n_1, \ldots, n_N) - b_0) \cdot \vec{a}$$ and note that it has $0$ constant term. Let $q'$ be the gcd of $q$ and all the coefficients of $F(n_1, \ldots, n_N)$. We claim that $q' \leq Q$. To see this, let $U = \bZ/q\bZ$ and let $\theta: \bZ^r \to U$ be the map given by $\theta(x_1, \ldots, x_r) = \sum_{i=1}^r a_i x_i$ and note that it is surjective since $gcd(a_1, \ldots, a_r, q) = 1$. The image of $\theta \circ S$ is contained in $q'U + \theta(b_0)$, as it is a polynomial which each coefficient dividing $q'$, hence $\mathcal{S}v $ is contained in the coset $\theta^{-1}(q'U) + b_0$, thus from the assumption that it is $Q$-coset fleeing we get that $|\bZ^r /\theta^{-1}(q'U)| \leq Q$. However as $\theta$ is surjective we have that $$|\bZ^r /\theta^{-1}(q'U)| = |U/q'U| = q'$$ where the last equality holds since $q' | q$. Thus $q' \leq Q$ as required.

Now observe that the substitutions $n_j \mapsto n^{(r+1)^j}$ induce a map $\bZ[n_1, \ldots, n_N] \to \bZ[n]$ that is injective on the monomials appearing in $S(n_1, \ldots, n_N)$. Hence $F(n^{r+1}, n^{(r+1)^2}, \ldots, n^{(r+1)^N}) \in \bZ[n]$ is a polynomial in $n$ which has the same set of coefficients as $F(n_1, \ldots , n_N)$. Thus $P(n) = S(n^{r+1}, n^{(r+1)^2}, \ldots, n^{(r+1)^N})$ has degree at most $D = (r+1)^{N+1} - r - 1$ and it has multiplicative complexity $Q$, as required. Finally, $P(n) = (P_1(n), \ldots, P_{r}(n))$ is hyperplane fleeing as otherwise some linear combination of the $P_i(n)$ is a constant function, hence the constant polynomial, hence some non-trivial linear combination of the $S_i(n)$ is constant, contradicting the assumption that $\mathcal{S}v$ is hyperplane-fleeing.
\end{proof}

\section{Applications to the adjoint representation and the proof of Theorem~\ref{thm: uniform virtual recurrence for adjoint representation}}

We now demonstrate how to deduce Theorem~\ref{thm: uniform virtual recurrence for adjoint representation} from Theorem~\ref{thm: uniformly virtually recurrent orbits for unipotents} by showing how the hypothesis of Theorem~\ref{thm: uniformly virtually recurrent orbits for unipotents} is satisfied by the companion matrices in $\mathfrak{sl}_d(\bZ)$. This technique will be easily generalized to $\operatorname{SO}(F)$ for the quadratic form $F(x,y,z) = x^2 - y^2 - z^2$.

Let $\Lambda = \mathfrak{sl}_d({\bZ}) \cong \bZ^{r}$, where $r = d^2 -1$, be the additive group of $d \times d$ integer matrices with zero trace and we let $\Gamma = \operatorname{SL}_d(\bZ)$ act on $\Lambda$ by conjugation.

Note that $\Gamma$ is generated by finitely many unipotents $u_1, \ldots, u_{\ell}$ (the elementary matrices) and hence if we set $$\Gamma_N = \{u_1^{k_1} \cdots u_N^{k_N} ~|~ k_i \in \bZ \}$$ where we use cyclic notation $u_{n} = u_{n \pmod{\ell}}$ then $\Gamma = \bigcup_{N \geq 1} \Gamma_N$ with $\Gamma_N \subset \Gamma_{N+1}$. Note that the image of $u_i$ in $\operatorname{SL}_r(\bZ)$ (i.e., the map $v \mapsto u_i v u_i^{-1}$) is also unipotent since this mapping $\operatorname{SL}_d(\bZ) \to \operatorname{SL}_r(\bZ)$ is a group homomorphism and a polynomial map.

Given a polynomial $p(t) = a_0 + a_1t + \cdots + a_{n-1}t^{n-1} + t^n$, we can define its companion matrix by

$$c_p = \begin{bmatrix}
    0   & \dots & 0 & 0 & -a_0 \\
    1  & \dots & 0 & 0 & -a_1 \\
    \vdots & \ddots & \vdots & \vdots & \vdots \\
    0 & \dots & 1  & 0 & -a_{n-2} \\
    0 & \dots & 0 & 1  & -a_{n-1}
\end{bmatrix}$$

The characteristic polynomial of $c_p$ is $p(t)$. Assume now that $a_{n-1} = 0$. Now consider the elementary matrix

$$\gamma_0 = \begin{bmatrix}
    1   & 0 & 0 & 0 & 1 \\
    0   & 1 & 0 & 0 & 0 \\
    \vdots & \vdots & \ddots & \vdots & \vdots \\
    0 & 0 & 0  & 1 & 0 \\
    0 & 0 & 0 & 0  & 1
\end{bmatrix}$$ and notice that\footnote{Note that $\gamma_0 c_p$ here means conjugation not matrix multiplication, as that is the group action of interest. Note also that $-\delta_{n,2}$ is the Kronecker delta (so $-1$ if $n=2$ and $0$ otherwise)} \begin{align} \label{char polynomials identity} v_0 = \gamma_0 c_p - c_p = \begin{bmatrix}
    0 & \dots  & 0  & 1 & -\delta_{n,2} \\
    0 & \dots  & 0 & 0 & -1 \\
    0 & \dots & 0  & 0 & 0 \\
    \vdots & \dots & \vdots & \vdots & \vdots \\
    0 & \dots & 0 & 0  & 0
\end{bmatrix} \end{align} is a non-zero constant independent of $p$.

\begin{prop}\label{prop: N,Q bounds} There exist constants $Q, N < \infty$ such that for all companion matrices $c_p \in \mathfrak{sl}_d(\bZ)$ the set $\Gamma_Nc_p$ is $Q$-coset fleeing and hyperplane-fleeing.   \end{prop}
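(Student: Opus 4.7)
The approach is to exploit the $p$-independent identity (\ref{char polynomials identity}), which produces a fixed nonzero vector $v_0 = \gamma_0 \cdot c_p - c_p \in \mathfrak{sl}_d(\bZ)$, and combine it with irreducibility of the adjoint representation to control the spread of $\Gamma_N c_p$ uniformly in $p$. The key leverage is that, by linearity of the conjugation action, for every $\gamma \in \Gamma$,
\[
(\gamma \gamma_0)\cdot c_p - \gamma \cdot c_p \;=\; \gamma \cdot (\gamma_0 \cdot c_p - c_p) \;=\; \gamma \cdot v_0,
\]
so the orbit $\Gamma v_0$ --- which does not depend on $p$ --- sits inside the difference set $\Gamma\cdot c_p - \Gamma\cdot c_p$. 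From here uniformity in $p$ is inevitable.

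First I would arrange the cyclic generating sequence $u_1, \ldots, u_\ell$ of $\Gamma$ so that $\gamma_0 = u_j$ for some $j$, and work with an $N$ satisfying $N \equiv j \pmod{\ell}$, ensuring $u_N = \gamma_0$. Then for each $\gamma \in \Gamma_{N-1}$, appending $u_N = \gamma_0$ yields $\gamma\gamma_0 \in \Gamma_N$, so $\gamma \cdot v_0 \in \Gamma_N c_p - \Gamma_N c_p$. Next, let $\Lambda_0 \leq \Lambda = \mathfrak{sl}_d(\bZ)$ denote the $\bZ$-span of $\Gamma v_0$; this is a $\Gamma$-invariant subgroup of $\Lambda$. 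Since $\operatorname{SL}_d(\bZ)$ is Zariski-dense in $\operatorname{SL}_d(\bR)$ and the adjoint representation on $\mathfrak{sl}_d(\bR)$ is irreducible (for $d \geq 2$), $\Gamma$-invariance together with $v_0 \neq 0$ forces $\bR \Lambda_0 = \mathfrak{sl}_d(\bR)$. Hence $\Lambda_0$ has full rank in $\Lambda \cong \bZ^r$ and finite index $Q := |\Lambda / \Lambda_0|$. Because $\Lambda_0$ is finitely generated, finitely many $\gamma_1 v_0, \ldots, \gamma_s v_0$ already $\bZ$-span it; choose $N$ further large enough (while preserving $N \equiv j \pmod{\ell}$) that $\gamma_1, \ldots, \gamma_s \in \Gamma_{N-1}$. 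Both $N$ and $Q$ then depend only on $d$.

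With these choices, the subgroup $M_p \leq \Lambda$ generated by $\Gamma_N c_p - \Gamma_N c_p$ contains $\Lambda_0$ for every companion matrix $c_p \in \mathfrak{sl}_d(\bZ)$. Both conclusions follow immediately: if $\Gamma_N c_p \subset a + W$ then every pairwise difference lies in $W$, so $W \supset M_p \supset \Lambda_0$ and $|\Lambda/W| \leq Q$, giving $Q$-coset fleeing; and the $\bR$-span of the differences already equals $\bR \Lambda_0 = \mathfrak{sl}_d(\bR)$, so the affine span of $\Gamma_N c_p$ is the full ambient space, giving hyperplane-fleeing.

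The only mathematically substantive input is the identity (\ref{char polynomials identity}) itself --- it is what converts the problem from one about the $p$-dependent orbit $\Gamma c_p$ to one about the single fixed vector $v_0$, where classical representation-theoretic machinery (Zariski density plus irreducibility of the adjoint) takes over. Everything else is linearity of conjugation plus finite generation. The main practical nuisance is aligning $N$ with the cyclic generator indexing so that $\gamma\gamma_0 \in \Gamma_N$ whenever $\gamma \in \Gamma_{N-1}$; this is notationally fiddly but carries no real content.
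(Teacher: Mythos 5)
Your proof is correct and follows essentially the same path as the paper's: both exploit the $p$-independent identity $\gamma_0 c_p - c_p = v_0$, use irreducibility of the adjoint representation (via Zariski density of $\operatorname{SL}_d(\bZ)$) to conclude that the $\bR$-span of $\Gamma v_0$ is all of $\mathfrak{sl}_d(\bR)$ and hence that the $\bZ$-span $\Lambda_0$ has finite index $Q$, and then invoke finite generation to capture $\Lambda_0$ inside the differences of a finite partial orbit $\Gamma_N c_p$. The only cosmetic difference is that you make the cyclic-indexing bookkeeping explicit via the congruence $N \equiv j \pmod{\ell}$, whereas the paper states it more tersely as ``take $N \geq n$ such that $\Gamma_n \gamma_0 \subset \Gamma_N$''; these are the same maneuver.
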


\begin{proof} As $v_0 \neq 0$ and the action of $\operatorname{SL}_d(\bZ)$ on the $\bR$-vector space $\mathfrak{sl}_d(\bR)$ is an irreducible representation, we have that the $\bR$-span of the orbit $\Gamma v_0$ is the whole $\mathfrak{sl}_d(\bR)$. It follows that $\mathbb{Z}$-span of $\Gamma v_0$ is a finite index subgroup $W_0$ of $\mathfrak{sl}_d(\bZ)$, say of index $Q$. In fact,  there is a large enough $n$ so that the $\mathbb{Z}$-span of the partial orbit $\Gamma_n v_0$ is $W_0$ (as finitely generated abelian groups are Noetherian). Now take $N \geq n$ such that $\Gamma_n \gamma_0 \subset \Gamma_N$. We claim that these $N$ and $Q$, clearly constructed independently of $c_p$, satisfy the claim. To see this, suppose that $W \leq \Lambda =  \mathfrak{sl}_d({\bZ}) $ is a subgroup such that $\Gamma_N c_p \subset W + a$ for some $a \in \Lambda$. Then $$W \supset \Gamma_Nc_p - \Gamma_Nc_p \supset \Gamma_n(\gamma_0 c_p - c_p) = \Gamma_n v_0 $$ and thus $|\Lambda / W| \leq Q$ as required. Likewise, since the $\mathbb{R}$-span of $\Gamma_N v_0$ is the whole of $\mathfrak{sl}_d(\bR)$, the same argument also gives that $\Gamma_N c_p$ cannot be contained in any translate of a strict subspace, thus showing that $\Gamma_N c_p$ is also hyperplane-fleeing. \end{proof}

\section{Quadratic form $x^2 - y^2 - z^2$ and the completion of the proof of Theorem~\ref{thm: example of uniform virtually recurrent level sets}}
\label{Section7}

 Let $F(x,y,z)=x^2-y^2-z^2$ and observe that $$F(x,y,z)= \operatorname{det} \begin{pmatrix} z & -(x+y) \\ x-y & -z \end{pmatrix}.$$ Hence we may regard $F$ as the determinant map on the abelian subgroup $$\Lambda = \left \{ \begin{pmatrix} a_{11} & a_{12} \\ a_{21} & a_{22} \end{pmatrix} \in \mathfrak{sl}_2(\bZ) \text{ }| \text{ } a_{21} \equiv a_{12} \mod 2 \right\}.$$ Now notice that the conjugation action of  $$\Gamma = \left \langle \begin{pmatrix}1 & 2 \\ 0 & 1 \end{pmatrix}, \begin{pmatrix}1 & 0 \\ 2 & 1 \end{pmatrix} \right \rangle,$$ preserves this additive subgroup and acts irreducibly on $\mathfrak{sl}_2(\bR)$ (see Appendix~\ref{appendix: irreducibility zariski dense}). Moreover, if we let $$\gamma_0 = \begin{pmatrix} 1 & -2 \\ 0 & 1 \end{pmatrix},\quad a_t = \begin{pmatrix} 0 & 2t \\ 2 & 0 \end{pmatrix}, \quad v_0 = \begin{pmatrix} 4 & -8 \\ 0 & -4 \end{pmatrix}$$ then we notice that we have the following analogue of identity (\ref{char polynomials identity}) $$ \gamma_0 a_t - a_t = v_0 \text{ for all } t \in \bZ. $$

Thus, by Theorem~\ref{thm: uniformly virtually recurrent orbits for unipotents}, the same argument as in Proposition~\ref{prop: N,Q bounds} applies to show that the action of $\Gamma$ on $\Lambda$ has uniformly virtually recurrent orbits across the set $\{ a_t ~|~ t \in \bZ\}$ and thus that $F:\bZ^3 \to \bZ$ has uniformly virtually recurrent level sets, as claimed in Theorem~\ref{thm: example of uniform virtually recurrent level sets}.

\section{Quantitative polynomial Bogolyubov's theorem}

We now prove the Polynomial Bogolyubov Theorem (Theorem~\ref{thm: polynomial bog}). By use of Furstenberg's correspondence principle \cite{Fur77} it is enough to show the following result.

\begin{thm}\label{Bog-thm}
Let $\epsilon > 0$, and $R(n) = r_D n^D + \cdots + r_1n \in \bZ[n]$ be a polynomial satisfying $R(0) = 0$ and $D = \deg{R} \geq 2$. There exists a positive integer $k(\epsilon,r_D)$ such that for every ergodic $\bZ^2$ measure-preserving system $(X,\mu,T)$ and any measurable set $B \subset X$ satisfying $\mu(B) > \epsilon$ there exists a positive integer $k \le k(\epsilon,r_D)$ such that for every $m \in \bZ$ we can find $(x,y) \in \bZ^2$ with $x+R(y) = k m$ and 
\[
\mu(B \cap T^{(x,y)}B) > 0.
\]
\end{thm}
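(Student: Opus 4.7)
The plan is to apply Theorem~\ref{thm: uniform polynomial recurrence} to a carefully chosen family of polynomials, indexed by the target $m \in \bZ$, so that the scaled return times land exactly on the fiber $\{(x,y) : x + R(y) = km\}$. Since $R(0) = 0$, for any positive integer $k$ the integer $R(kn)$ is divisible by $k$, so we may define, for each $m \in \bZ$, the integer-valued polynomial $P_m : \bZ \to \bZ^2$ by
\[
P_m(n) = \bigl( m - R(kn)/k,\; n \bigr).
\]
The scaled image $kP_m(n) = (km - R(kn),\, kn)$ lies identically on the fiber $\{(x,y) : x + R(y) = km\}$, so any $n$ producing a return time of $B$ of this form yields a valid $(x,y)$.

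I would next verify the hypotheses of Theorem~\ref{thm: uniform polynomial recurrence} for $P_m$. The polynomial has degree $D = \deg R$ and is hyperplane-fleeing because its second coordinate is the non-constant polynomial $n$. Expanding
\[
(P_m(n) - P_m(0)) \cdot (a_1, a_2) = (a_2 - a_1 r_1)\, n + \sum_{j=2}^D (-a_1 r_j k^{j-1})\, n^j,
\]
and observing that, under $\gcd(a_1, a_2, q) = 1$, the gcd of these coefficients with $q$ must be coprime to $a_1$ (by the same trick used throughout the paper's multiplicative-complexity computations), one obtains the bound $|r_D|\, k^{D-1}$ on the multiplicative complexity of $P_m$, crucially independent of $m$.

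Choosing a complexity bound $Q = Q(\epsilon, r_D)$ large enough and applying Theorem~\ref{thm: uniform polynomial recurrence} with parameters $(D, 2, Q, \epsilon)$ would produce a single $k \le k_0(D, 2, Q, \epsilon)$ such that, for every $m \in \bZ$, there exist arbitrarily large $n$ with $\mu(B \cap T^{kP_m(n)} B) > 0$. The pair $(x, y) = (km - R(kn),\, kn)$ then satisfies $x + R(y) = km$, as required.

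The hard part will be the circular dependence in choosing $Q$: the bound $|r_D|\, k^{D-1}$ on the multiplicative complexity of $P_m$ involves the very $k$ that Theorem~\ref{thm: uniform polynomial recurrence} produces, which is itself bounded by $k_0(D, 2, Q, \epsilon)$. Resolving this requires choosing $Q$ as an explicit function of $\epsilon$ and $r_D$ satisfying the fixed-point-style condition $Q \ge |r_D|\, k_0(D, 2, Q, \epsilon)^{D-1}$; doing so demands careful tracking of the quantitative dependencies from the Hua bound in Proposition~\ref{prop: multivariate Hua bound} and the measure-increment bound $k \le q^{\log(\epsilon^{-1})/\log(1+\delta)}$ in Lemma~\ref{lemma: measure increment}.
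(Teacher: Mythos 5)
Your setup is essentially the paper's: the same family of curves $(km - R(kn), kn)$ parametrizing the fiber $x+R(y)=km$, the same hyperplane-fleeing observation (valid because $D\ge 2$), and a multiplicative-complexity bound of the form $|r_D|\,k^{D-1}$ for the rescaled polynomial, which is correct (indeed slightly sharper than the $r_D k^D$ stated in the paper). Where your proposal goes wrong is the proposed resolution of the circular dependency between $Q$ and $k$.

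A fixed point $Q \ge |r_D|\, k_0(D,2,Q,\epsilon)^{D-1}$ does not exist. Tracing the dependencies: in Theorem~\ref{polynomial mean ergodic thm} one must choose $q_0$ so that the Hua-type estimate $C_{D,\epsilon'}(q_0/Q)^{\epsilon'-1/D}$ is small, forcing $q_0 \gtrsim Q$; then $q=\operatorname{lcm}\{1,\dots,q_0\}$ is exponential in $q_0$; and Lemma~\ref{lemma: measure increment} gives $k_0 \le q^{\log(\epsilon^{-1})/\log(1+\delta)}$. So $k_0(D,2,Q,\epsilon)$ grows exponentially in $Q$, and the inequality $Q \ge |r_D|\,k_0(Q)^{D-1}$ fails for every $Q$. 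Your instinct that ``careful tracking of the quantitative dependencies'' would rescue the fixed point is misplaced; the dependence of $k_0$ on $Q$ is too violent for any choice of $Q$ to close the loop in one shot.

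The paper instead opens up the measure-increment argument and runs it \emph{adaptively}, updating the complexity parameter at each stage using only the $k$ already accumulated. Start with $Q_1 = r_D$ (so $k=1$) and the $q_1$ it produces via Theorem~\ref{thm: low complexity and equidistribution gives recurrence}: if $B$ is $(q_1,\delta)$-equidistributed, recurrence along $P_{1,c}$ follows and we take $k=1$. If not, pass to a $T^{q_1}$-ergodic component $\nu_1$ with $\nu_1(B)\ge(1+\delta)\mu(B)$; at this stage the accumulated $k$ is $q_1$, a \emph{known} number, so the next complexity bound $r_D q_1^D$ is also known, and one repeats with the $q_2$ it produces. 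Each failed stage increases the measure of $B$ by the factor $(1+\delta)$, so the process terminates in at most $\lceil\log(\epsilon^{-1})/\log(1+\delta)\rceil$ steps; the final $k$ is a product of that many recursively determined integers, each bounded in terms of $\epsilon$ and $r_D$ only. This bootstrapping — the complexity bound is not fixed in advance but grows stage by stage, while the density increment gives an a priori cap on the number of stages — is precisely the missing idea, and it is what makes the quantitative claim hold without any fixed-point condition.
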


%\subsection{Proof of Theorem \ref{Bog-thm}}

\begin{proof} 

Fix such a $B \subset X$ with $\mu(B) > \epsilon$ and let $\delta = \frac{1}{12}\epsilon^4$. We observe that solutions to $x + R(y) = kc$ contain the curve $$P_{k,c}(n) = (kc - R(kn), kn).$$ Note that each $P_{k,c}$ is hyperplane fleeing (as $P_{k,c}(n) \cdot (a_1, a_2)$ is a non-constant polynomial for all $(a_1,a_2) \in \bR^2 \setminus \{(0,0)\}$ as $D \geq 2$) and has multiplicative complexity $r_Dk^{D+1}$, hence $k^{-1}P_{k,c} \in \bZ[n]^2$ has multiplicative complexity $r_Dk^{D}$. In particular, $P_{1,c}(n)$ has multiplicative complexity $r_D$ hence by Theorem~\ref{thm: low complexity and equidistribution gives recurrence} (applied with $Q=r_D$ and $r=2$) there exists a positive integer $q_1 =  q(D, 2, r_D, \frac{\epsilon^2}{2})$ such that \textbf{if it were the case} that $B$ is $(q_1,\delta)$- equidistributed then $$\mu(T^{P_{1,c}(n)}B \cap B) > \mu(B)^2 - \epsilon^2 > 0 \quad \text{for infinitely many } n \in \bZ.$$ Hence the theorem is true with $k=1$ in this case. So now assume that $B$ is \textbf{not} $(q_1,\delta)$- equidistributed. Then by a measure increment argument as given in Lemma~\ref{lemma: measure increment} there exists a $T^{q_1}$ ergodic component, say $\nu$, of $\mu$ such that $\nu(B) \geq (1+\delta) \mu(B)$. By Theorem~\ref{thm: low complexity and equidistribution gives recurrence} now applied to $\nu_1$ there exists an integer $q_2 = q(D,2,r_Dq_1^{D}, \frac{\epsilon^2}{2})$ such that if $B$ is $(q_2,\delta)$ equdistributed with respect to $\nu_1$ then $$\nu_1(\left(T^{q_1}\right)^ {q_1^{-1}P_{q_1,c}(n)}B \cap B) > \nu_1(B)^2 - \epsilon^2 > 0 \quad \text{for infinitely many } n \in \bZ.$$ Note that since $\nu_1 ( B') > 0 \implies \mu(B') > 0$ this means that we are done with $k = q_1$. Hence assume now that $B$ is \textbf{not} $(q_2, \delta)$-equdistributed, thus there exists a $T^{q_2}$-ergodic component, say $\nu_2$ of $\nu_1$ such that $\nu_2(B) \geq (1+\delta) \nu_1(B) \geq (1+\delta)^2 \mu(B)$. Note that $\nu_2$ is a $T^{q_1q_2}$ ergodic component of $\mu$ and so we may repeat the same argument as before with $q_1q_2$ in place of $q_1$ etc. This procedure must eventually stop as after $j$ steps the ergodic component will have $\nu_j (B) \geq (1+\delta)^j \mu(B) \geq (1+\delta)^j \epsilon$ and so the number of steps is bounded as a function of $\epsilon$, as required. The final value of $k$ is then a product of at most $j(\epsilon)$ integers depending only on $\epsilon$ and $r_D$, hence depends only on $\epsilon$ and $r_D$ as required.

\end{proof}

\appendix
\section{Some algebraic facts} \label{appendix: irreducibility zariski dense}

\begin{lemma} Let $\Gamma \leq G \leq \operatorname{GL}_n(\bR)$ be groups such that $G$ is the Zariski closure of $\Gamma$. Suppose that $\rho:G \to \operatorname{GL}_d(\bR)$ is an irreducible representation such that $\rho$ is a polynomial map. Then the restriction $\rho |_{\Gamma}: \Gamma \to \operatorname{GL}_d(\bR)$ is also irreducible. 
\end{lemma}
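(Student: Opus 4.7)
The plan is to argue by contradiction: assume $\rho|_{\Gamma}$ is reducible, so there is a proper nonzero subspace $W \subset \bR^d$ with $\rho(\gamma)W \subset W$ for all $\gamma \in \Gamma$. I will then show that the stabilizer of $W$ in $G$ (i.e., the set of $g \in G$ with $\rho(g)W \subset W$) is Zariski closed in $G$, forcing it to contain the entire Zariski closure of $\Gamma$, which is $G$. This contradicts the irreducibility of $\rho$ as a $G$-representation.

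More concretely, first I would pick a basis $w_1, \ldots, w_k$ of $W$ and a basis $\ell_1, \ldots, \ell_{d-k}$ of the annihilator $W^{\circ} \subset (\bR^d)^*$ (the linear functionals vanishing on $W$). The condition $\rho(g)W \subset W$ is then equivalent to the simultaneous vanishing of the scalars $\ell_i(\rho(g) w_j)$ for $1 \leq i \leq d-k$ and $1 \leq j \leq k$. Each of these scalars is a polynomial in the matrix entries of $g$, because by hypothesis $\rho$ is a polynomial map from $G \subset \operatorname{GL}_n(\bR)$ to $\operatorname{GL}_d(\bR)$ and $\ell_i$, $w_j$ are fixed. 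Hence the set
\[
H = \{ g \in G ~|~ \rho(g) W \subset W \}
\]
is defined inside $G$ by the vanishing of finitely many polynomials in the coordinates of $\operatorname{GL}_n(\bR)$, so $H$ is Zariski closed in $\operatorname{GL}_n(\bR)$ and therefore also Zariski closed in $G$.

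Since $\Gamma \subset H$ by the assumption that $W$ is $\rho(\Gamma)$-invariant, and since $G$ is by hypothesis the Zariski closure of $\Gamma$, we conclude $G = H$, i.e., $\rho(g)W \subset W$ for every $g \in G$. But this contradicts the irreducibility of $\rho:G \to \operatorname{GL}_d(\bR)$, completing the proof.

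The only subtle point — and the one I would write out carefully — is the polynomiality claim used to show that $H$ is Zariski closed. One has to be clear that ``$\rho$ is a polynomial map'' means the matrix entries of $\rho(g)$ are polynomial functions of the entries of $g$ (and possibly $\det(g)^{-1}$, but this does not affect Zariski closedness since one can clear denominators on $\operatorname{GL}_n$). Once this is unpacked, the rest is a direct and routine application of the definition of Zariski closure; I do not anticipate any serious obstacle.
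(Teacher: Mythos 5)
Your proof is correct and is essentially the same as the paper's: both express the invariance of $W$ as the vanishing of polynomial functions on $G$ and invoke Zariski density of $\Gamma$ to propagate that vanishing from $\Gamma$ to all of $G$. The paper phrases this with a single vector $w \in W$ and the quotient map $\pi : \bR^d \to \bR^d/W$, whereas you work with the full stabilizer $H = \{g \in G \mid \rho(g)W \subset W\}$ via a basis of $W$ and of $W^\circ$; this is a cosmetic difference, and your extra remark about clearing powers of $\det(g)^{-1}$ is a valid clarification the paper leaves implicit.
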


\begin{proof} Suppose on the contrary that the restriction is reducible. This means that there exists a proper linear subspace $W \leq \bR^d$ and $w \in W$ such that $\rho(\Gamma) w \subset W$. Let $\pi:\bR^d \to \bR^d/W$ denote the quotient map. Then $P:G \to \operatorname{GL}_d(\bR)$ given by $P(g)=\pi (\rho(g) w)$ is a polynomial in $g$ which vanishes for all $g \in \Gamma$. Since $G$ is the Zariski closure of $\Gamma$, we get that $P$ also vanishes on $G$ and hence $\rho(G)w \subset W$, which contradicts the irreducibility of $\rho$.  \end{proof}

\begin{lemma} Let $a,b \in \bZ \setminus \{0\}$ be non-negative integers. Then the subgroup $$\Gamma_0 = \left \langle \begin{pmatrix}1 & a \\ 0 & 1 \end{pmatrix}, \begin{pmatrix}1 & 0 \\ b & 1 \end{pmatrix} \right \rangle$$ is Zariski dense in $\operatorname{SL}_2(\bR)$. 

\end{lemma}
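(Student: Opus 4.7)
The plan is to first show that the Zariski closure $H := \overline{\Gamma_0}$ contains both standard one-parameter unipotent subgroups
\[
U^+ = \left\{\begin{pmatrix}1 & x \\ 0 & 1\end{pmatrix} : x \in \bR\right\}, \qquad U^- = \left\{\begin{pmatrix}1 & 0 \\ y & 1\end{pmatrix} : y \in \bR\right\},
\]
and then use the classification of algebraic subgroups of $\operatorname{SL}_2$ to conclude $H = \operatorname{SL}_2(\bR)$.

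For the first step, I would observe that the cyclic group $\{ \begin{pmatrix}1 & an \\ 0 & 1\end{pmatrix} : n \in \bZ\}$ is contained in $\Gamma_0$, and since $a \neq 0$ the set $\{an : n \in \bZ\}$ is Zariski dense in $\bR$ (any polynomial vanishing on an infinite subset of $\bR$ is identically zero). Hence the Zariski closure of this cyclic group is all of $U^+$, so $U^+ \subset H$. Identical reasoning with $b \neq 0$ gives $U^- \subset H$.

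For the second step, the identity component $H^0$ in the Zariski topology is a connected algebraic subgroup of $\operatorname{SL}_2(\bR)$ containing both connected subgroups $U^+$ and $U^-$. By the classification of connected algebraic subgroups of $\operatorname{SL}_2$ (trivial, one-dimensional tori, one-dimensional unipotents, Borel subgroups, or all of $\operatorname{SL}_2$), the only one containing both $U^+$ and $U^-$ is $\operatorname{SL}_2$ itself: the Borel $B^+$ of upper triangular matrices contains $U^+$ but not $U^-$, and dually for $B^-$. Hence $H^0 = \operatorname{SL}_2(\bR)$, and since $\operatorname{SL}_2(\bR)$ has no proper subgroup of finite index, $H = \operatorname{SL}_2(\bR)$.

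The heavy lifting here is only in invoking the classification of algebraic subgroups of $\operatorname{SL}_2$, which is standard. To avoid citing it, one can argue equivalently at the Lie algebra level: $\operatorname{Lie}(H)$ contains $e = \begin{pmatrix}0 & 1 \\ 0 & 0\end{pmatrix}$ and $f = \begin{pmatrix}0 & 0 \\ 1 & 0\end{pmatrix}$ (the generators of the Lie algebras of $U^+$ and $U^-$), hence also $h = [e,f] = \begin{pmatrix}1 & 0 \\ 0 & -1\end{pmatrix}$, so $\operatorname{Lie}(H) = \mathfrak{sl}_2(\bR)$; a connected algebraic group is determined by its Lie algebra, which forces $H^0 = \operatorname{SL}_2(\bR)$. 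The only genuinely new input is the Zariski density of the arithmetic progressions $a\bZ$ and $b\bZ$ in $\bR$, which is immediate because $a, b \neq 0$.
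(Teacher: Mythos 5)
Your proof is correct and the key step is identical to the paper's: both arguments observe that a polynomial vanishing on $\Gamma_0$ must vanish on the infinite set $\{U(an) : n \in \bZ\}$, hence on all of $U^+$ (and likewise for $U^-$ using $b \neq 0$). Where you diverge is in the finishing move. The paper simply notes that $U^+$ and $U^-$ generate $\operatorname{SL}_2(\bR)$ as an abstract group, and since the Zariski closure of a subgroup is again a subgroup, this closure must be everything. You instead invoke either the classification of connected algebraic subgroups of $\operatorname{SL}_2$ or the Lie algebra computation $[e,f]=h$, which is a heavier tool for the same payoff; your concluding remark about $\operatorname{SL}_2(\bR)$ having no proper finite-index subgroup is also unnecessary, since once $H^0 = \operatorname{SL}_2(\bR)$ you trivially have $H = \operatorname{SL}_2(\bR)$ from $H^0 \subseteq H \subseteq \operatorname{SL}_2(\bR)$. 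Both routes are valid; the paper's is marginally more elementary.
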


\begin{proof} Let $$U(t) = \begin{pmatrix}1 & t \\ 0 & 1 \end{pmatrix}.$$ We wish to show that the Zariski closure of $\Gamma_0$ contains $U(t)$ and its transpose, for all $t \in \bR$, as these generate $\operatorname{SL}_2(\bR)$. Now suppose that $P:\operatorname{SL}_2(\bR) \to \bR$ is a polynomial map which vanishes on all of $\Gamma_0$. Then, in particular, the polynomial $R: \bR \to \bR$ given by $R(x)=P(U(x))$ vanishes on the infinite set $a\bZ$, and so $R(x)$ is the zero polynomial. Hence $P$ vanishes on $U(t)$, for all $t \in \bR$. This shows that $U(t)$ is in the Zariski closure, and a similar argument applies to its transpose.  \end{proof}

\begin{eg} The adjoint representation $\operatorname{Ad}:\operatorname{SL}_d(\bR) \to \operatorname{GL}(\mathfrak{sl}_d(\bR))$ is a polynomial map. It is an irreducible representation and hence the above may be applied to verify the claims in Theorem~\ref{thm: BFChar on adjoint representation}. \end{eg}

%%% AUTHOR: optional appendix here
%\appendix %% you may comment this out if no Appendix
%\section*{Appendix}
%\section{Improving the constants}
%Material is placed here as needed.

%%% AUTHOR: optional acknowledgments here
\section*{Acknowledgments} %%  you may comment this out if no Ackno
The authors are grateful to the anonymous referee for numerous suggestions that have been incorporated in the final version of the paper.

%%% AUTHOR:
%%% Bibliography goes here. Note that the arXiv cannot process bibtex
%%% or biber bibliographies.  Example of acceptable bibliograpy format:
\bibliographystyle{amsplain}

\begin{thebibliography}{99}
\bibitem{BBTwisted} Bj\"orklund, M.; Bulinski, K. \emph{Twisted patterns in large subsets of $\bZ^N$}. Comment. Math. Helv. 92 (2017), no. 3, 621--640.

\bibitem{BFChar} Bj\"orklund, M.; Fish, A. \emph{Characteristic polynomial patterns in difference sets of matrices.} Bull. Lond. Math. Soc. 48 (2016), no. 2, 300--308.

\bibitem{BLL} Bergelson, V.; Leibman, A.; Lesigne, E. \emph{Intersective polynomials and the polynomial Szemer\'edi theorem.} Adv. Math. 219 (2008), no. 1, 369--388. 

\bibitem{Bogolyubov} Bogolio\`uboff, N. \emph{Sur quelques propri\'{e}t\'{e}s arithm\'{e}tiques des presque-p\'{e}riodes.} Ann. Chaire
Phys. Math. Kiev, 4:185--205, 1939

\bibitem{BulinskiSpherical} Bulinski, K. \emph{Spherical recurrence and locally isometric embeddings of trees into positive density subsets of $\Bbb Z^d$}. Math. Proc. Cambridge Philos. Soc. 165 (2018), no. 2, 267--278.

\bibitem{FishProduct} Fish, A. \emph{On product of difference sets for sets of positive density.} Proc. Amer. Math. Soc. 146 (2018), no. 8, 3449--3453

\bibitem{Fur77}
Furstenberg, H.
\emph{Ergodic behavior of diagonal measures and a theorem of Szemer\'edi on arithmetic progressions.} 
J. Analyse Math. 31 (1977), 204--256. 

\bibitem{HuaBook} Hua, L. K. \emph{Additive theory of prime numbers.} Translations of Mathematical Monographs, Vol. 13 American Mathematical Society, Providence, R.I. 1965 xiii+190 pp.

\bibitem{Hua} Hua, L. K. \emph{On an exponential sum.} J. Chinese Math. Soc. 2 (1940), 301--312.

\bibitem{KM} T. Kamae and M. Mend\'es France, \emph{Van der Corput's difference theorem,} Israel J.
Math. 31 (1977), 335--342.

\bibitem{LM} N. Layall, \'A. Magyar, \emph{Distances and Trees in Dense Subsets of $\bZ^d$
}, arXiv:1509.09298. Published version:  Israel J. Math. 240 (2020), no. 2, 769--790.

\bibitem{Magyar} Magyar, \'A. 
\emph{On distance sets of large sets of integer points.} Israel J. Math. 164 (2008), 251--263.

 \bibitem{RuzsaBook} Ruzsa, I. Z. \emph{Sumsets and structure.} In Combinatorial number theory and additive group
theory, Adv. Courses Math. CRM Barcelona, pages 87--210. Birkh\"{a}user Verlag, Basel, 2009.


\end{thebibliography}

%% AUTHOR: You can generate such a bibliography from a .bib file by 
%% running pdflatex/bibtex/pdflatex/pdflatex and then pasting the .bbl file
%% between \begin{thebibliography} and \end{bibliography}

%%% AUTHOR: Include a short description of each author following the
%%% structure below. Use the same short tags used previously.  
%%% Use \imageat{} and \imagedot{} instead of "@" and "." in
%%% email addresses-this replaces the symbols with graphics to avoid 
%%% e-mail address harvesting from the .pdf file
\begin{dajauthors}
\begin{authorinfo}[kamil]
  Kamil Bulinski\\
  School of Mathematics and Statistics\\
  University of Sydney, Australia\\
  kamil\imagedot{}bulinski\imageat{}sydney\imagedot{}edu\imagedot{}au\\
 % \url{}
\end{authorinfo}
\begin{authorinfo}[alexander]
  Alexander Fish\\
  School of Mathematics and Statistics\\
  University of Sydney, Australia\\
  alexander\imagedot{}fish\imageat{}sydney\imagedot{}edu\imagedot{}au \\
  \url{https://www.maths.usyd.edu.au/u/afish/}
\end{authorinfo}
\end{dajauthors}

\end{document}